
\documentclass[11pt]{amsart}
\usepackage[leqno]{amsmath}
\usepackage{amssymb,latexsym,soul,cite,amsthm,color,enumitem,graphicx,tikz,mathtools,accents}
\usepackage[colorlinks=true,urlcolor=cerulean,citecolor=cerulean,linkcolor=cerulean,linktocpage,pdfpagelabels,bookmarksnumbered,bookmarksopen]{hyperref}
\definecolor{cerulean}{rgb}{0.0, 0.48, 0.65}
\usepackage[english]{babel}
\usepackage[left=3cm,right=3cm,top=3cm,bottom=3cm]{geometry}

\numberwithin{equation}{section}

\newtheorem{theorem}{Theorem}[section]
\theoremstyle{plain}
\newtheorem{lemma}[theorem]{Lemma}
\theoremstyle{plain}
\newtheorem{proposition}[theorem]{Proposition}
\theoremstyle{plain}

\theoremstyle{definition}
\newtheorem{remark}[theorem]{Remark}

\newcommand{\N}{{\mathbb N}}

\newcommand{\R}{{\mathbb R}}
\newcommand{\eps}{\varepsilon}
\newcommand{\beq}{\begin{equation}}
\newcommand{\eeq}{\end{equation}}
\renewcommand{\le}{\leqslant}
\renewcommand{\ge}{\geqslant}

\newcommand{\w}{W^{s,p}_0(\Omega)}
\newcommand{\fpl}{(-\Delta)_p^s\,}
\newcommand{\ds}{{\rm d}_\Omega^s}
\newcommand{\p}{p^*_s}
\newcommand{\cs}{C_s^0(\overline{\Omega})}

\makeatletter
\newcommand{\leqnomode}{\tagsleft@true}
\newcommand{\reqnomode}{\tagsleft@false}
\makeatother

\newenvironment{enumroman}{\begin{enumerate}

}{\end{enumerate}}

\title[Logistic equation for the fractional $p$-Laplacian]{On the logistic equation for the fractional $p$-Laplacian}

\author[A.\ Iannizzotto, S.\ Mosconi, N.S.\ Papageorgiou]{Antonio Iannizzotto, Sunra Mosconi, and Nikolaos S.\ Papageorgiou}

\address[A.\ Iannizzotto]{Department of Mathematics and Computer Science
\newline\indent
University of Cagliari
\newline\indent
Via Ospedale 72, 09124 Cagliari, Italy}
\email{antonio.iannizzotto@unica.it}

\address[S.\ Mosconi]{Department of Mathematics and Computer Science
\newline\indent
University of Catania
\newline\indent
Viale A.\ Doria 6, 95125 Catania, Italy}
\email{mosconi@dmi.unict.it}

\address[N.S.\ Papageorgiou]{Department of Mathematics
\newline\indent
National Technical University
\newline\indent
Zografou Campus, Athens 15780, Greece}
\email{npapg@math.ntua.gr}

\subjclass[2010]{35A15, 35B51, 35R11.}
\keywords{Fractional $p$-Laplacian, logistic equation, bifurcation, comparison principle.}

\begin{document}

\begin{abstract}
We consider a Dirichlet type problem for a nonlinear, nonlocal equation driven by the degenerate fractional $p$-Laplacian, with a logistic type reaction depending on a positive parameter. In the subdiffusive and equidiffusive cases, we prove existence and uniqueness of the positive solution when the parameter lies in convenient intervals. In the superdiffusive case, we establish a bifurcation result. A new strong comparison result, of independent interest, plays a crucial role in the proof of such bifurcation result.
\end{abstract}

\maketitle

\begin{center}
Version of \today\
\end{center}

\section{Introduction}\label{sec1}

\noindent
The present paper is devoted to the study of the following nonlinear elliptic equation of fractional order with Dirichlet type condition:
\[(P_\lambda) \qquad \begin{cases}
\fpl u = \lambda u^{q-1}-u^{r-1} & \text{in $\Omega$} \\
u > 0 & \text{in $\Omega$} \\
u = 0 & \text{in $\Omega^c$.}
\end{cases}\]
Here $\Omega\subset\R^N$ ($N\ge 2)$ is a bounded domain with $C^{1,1}$ boundary $\partial\Omega$, $s\in(0,1)$, $p\ge 2$ are s.t.\ $ps<N$, and the leading operator is the degenerate fractional $p$-Laplacian, defined for all $u:\R^N\to\R$ smooth enough and $x\in\R^N$ by
\[\fpl u(x)=2\lim_{\eps\to 0^+}\int_{\{|x-y|>\eps\}}\frac{|u(x)-u(y)|^{p-2}(u(x)-u(y))}{|x-y|^{N+ps}}\,dy\]
(which for $p=2$ reduces to the linear fractional Laplacian, up to a dimensional constant $C(N,s)>0$). The reaction is of logistic type, with powers $1<q<r<\p$, where $\p=Np/(N-ps)$ denotes the critical exponent for fractional Sobolev spaces, and $\lambda>0$ is a parameter. Problem $(P_\lambda)$ may is classified in three different types, according to the principal exponent $q>1$:
\begin{itemize}
\item[$(a)$] subdiffusive, if $q<p<r$;
\item[$(b)$] equidiffusive, if $p=q<r$;
\item[$(c)$] superdiffusive, if $p<q<r$.
\end{itemize}
Logistic equations are widely studied mainly because of their important applications in mathematical biology. Indeed, the parabolic semilinear logistic equation describes the evolution and spatial distribution of a biological population in the presence of constant rates of reproduction and mortality (Verhulst's law), see \cite{GM}. This is the obvious reason why, in the study of logistic type equations, authors are usually interested in {\em positive} solutions. More recently, evolutive systems involving logistic terms have been studied as a model for the biological phenomenon of chemotaxis \cite{TW}, and existence of solutions in the presence of a parameter was studied in \cite{AB,CC}. Regarding the elliptic counterpart, it models an equilibrium distribution, see \cite{CDT}. Several existence results for the equidiffusive cases $(b)$, combining variational an topological methods, can be found in \cite{AL,AM,S} (note that multiplicity often includes negative and nodal solutions). Bifurcation results for the superdiffusive case $(c)$ can be found in \cite{IP} for the Dirichlet problem, and in \cite{MP} for the whole space.
\vskip2pt
\noindent
Fractional order equations also have a close connection to mathematical biology, since fractional elliptic operators model space diffusion via L\'evy type random motion with jumps, and hence they can be used to describe the movement of populations, see \cite{BRR,PV}. Studies on logistic equations with several nonlocal operators of fractional order have appeared in recent years, including the square root of the Dirichlet Laplacian \cite{CM}, the spectral Neumann fractional Laplacian \cite{MPV}, and the fractional Laplacian on the whole space \cite{QX}.
\vskip2pt
\noindent
The operator we consider here is both nonlinear and nonlocal. It represents the nonlinear generalization of the fractional Laplacian, and it can be seen as the gradient of the functional $u\mapsto [u]_{s,p}^p/p$ in the fractional Sobolev space $W^{s,p}(\R^N)$ (see Section \ref{sec2} below), as first pointed out in \cite{BCF}. The corresponding eigenvalue problem was studied in \cite{LL}, which led to existence results for general nonlinear reactions via Morse theory in \cite{ILPS}. Due to the nature of the operator, regularity theory for weak solutions requires a considerable effort as most usual techniques (including the Caffarelli-Silvestre extension method) do not apply here. For any $p>1$, H\"older continuity of weak solutions in the interior and up to the boundary was studied in \cite{DKP} and \cite{IMS1}, respectively. In the degenerate case $p>2$, optimal interior H\"older regularity was proved in \cite{BLS}, while a weighted global H\"older regularity result was proved in \cite{IMS2} (the singular case $p\in(1,2)$ is still open).
\vskip2pt
\noindent
The result of \cite{IMS2} is the fractional counterpart  of Lieberman's $C^{1,\alpha}$-regularity result for the classical $p$-Laplacian \cite{L} and yields many applications, such as the equivalence of Sobolev and H\"older local minimizers of the energy functional \cite{IMS3}, the existence of extremal constant sign solutions \cite{FI}, and more recently a Brezis-Oswald type weak comparison principle \cite{IL}. We also recall other interesting related results, such as the study of critical growth and singularity performed in \cite{CMS} and the bifurcation results of \cite{DQ1,PSY}. For further information we refer the reader to the surveys \cite{MS,P}.
\vskip2pt
\noindent
As far as we know, the present literature includes no specific study on the logistic equation for the fractional $p$-Laplacian. This paper aims at filling the gap, by presenting the following general result for existence of solutions to problem $(P_\lambda)$ (in which $\hat\lambda_1>0$ denotes the principal eigenvalue of $\fpl$ in $\Omega$ with Dirichlet conditions):

\begin{theorem}\label{main}
Let $\Omega\subset\R^N$ be a bounded domain with $C^{1,1}$-boundary, $p\ge 2$, $s\in(0,1)$ s.t.\ $ps<N$, and $1<q<r<\p$. Then, the following hold:
\begin{itemize}
\item[$(a)$] if $q<p$, then for all $\lambda>0$ problem $(P_\lambda)$ has a unique solution $u_\lambda>0$, with $u_\lambda>u_\mu$ in $\Omega$ for all $\lambda>\mu>0$ and $u_\lambda\to 0$ as $\lambda\to 0^+$;
\item[$(b)$] if $q=p$, then for all $\lambda\in(0,\hat\lambda_1]$ problem $(P_\lambda)$ has no solution, while for all $\lambda>\hat\lambda_1$ $(P_\lambda)$ has a unique solution $u_\lambda>0$, with $u_\lambda>u_\mu$ in $\Omega$ for all $\lambda>\mu>\hat\lambda_1$ and $u_\lambda\to 0$ as $\lambda\to\hat\lambda_1^+$;
\item[$(c)$] if $q>p$, then there exists $\lambda_*>0$ s.t.\ for all $\lambda\in(0,\lambda_*)$ problem $(P_\lambda)$ has no solution, while $(P_{\lambda_*})$ has at least one solution $u_*>0$, and for all $\lambda>\lambda_*$ $(P_\lambda)$ has at least two solutions $u_\lambda>v_\lambda>0$, with $u_\lambda>u_\mu$ in $\Omega$ for all $\lambda>\mu>\lambda_*$ and $u_\lambda\to u_*$ as $\lambda\to\lambda_*^+$.
\end{itemize}
\end{theorem}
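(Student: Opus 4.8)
\emph{Plan.} The plan is to treat the three cases with a common toolbox: weak solutions in $\w$; the sub-supersolution method; the global weighted regularity of \cite{IMS2}, by which every positive solution $u$ satisfies $u\asymp\ds$ and lies in $\cs$; the fractional Hopf lemma and strong maximum principle; the Brezis--Oswald type uniqueness principle of \cite{IL}; the variational formulation of $(P_\lambda)$ through the $C^1$ energy $J_\lambda(u)=\frac1p[u]_{s,p}^p-\frac\lambda q\|u^+\|_q^q+\frac1r\|u^+\|_r^r$ on $\w$, whose nontrivial critical points are positive solutions; the equivalence of $\cs$- and $\w$-local minimizers of $J_\lambda$ from \cite{IMS3}; the $(S_+)$-property of $\fpl$; and, decisively for case $(c)$, a new strong comparison principle. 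I write $\hat e_1>0$ for the principal eigenfunction ($\fpl\hat e_1=\hat\lambda_1\hat e_1^{p-1}$, $\hat e_1\asymp\ds$) and $w_1>0$ for the torsion function ($\fpl w_1=1$, $w_1\asymp\ds$), so $\fpl(t\hat e_1)=\hat\lambda_1 t^{p-1}\hat e_1^{p-1}$ and $\fpl(tw_1)=t^{p-1}$ for $t>0$.

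\emph{Cases $(a)$ and $(b)$.} For $q\le p$ I would produce a solution by the sub-supersolution method. Dividing by $(\eps\hat e_1)^{q-1}$, the subsolution inequality for $\underline u=\eps\hat e_1$ becomes $\hat\lambda_1\eps^{p-q}\hat e_1^{p-q}\le\lambda-\eps^{r-q}\hat e_1^{r-q}$, which holds on $\Omega$ uniformly up to $\partial\Omega$ for $\eps$ small --- using $q\le p$ and, in case $(b)$, $\lambda>\hat\lambda_1$. On the other hand $\bar u=tw_1$ is a supersolution for $t$ large, since $\fpl\bar u=t^{p-1}$ while $\lambda\bar u^{q-1}-\bar u^{r-1}\le\max_{\tau\ge0}(\lambda\tau^{q-1}-\tau^{r-1})<\infty$ (here $r>q$); enlarging $t$ gives $\underline u\le\bar u$ and hence a solution $u_\lambda\in[\underline u,\bar u]$. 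Uniqueness follows from \cite{IL} because $\tau\mapsto(\lambda\tau^{q-1}-\tau^{r-1})/\tau^{p-1}=\lambda\tau^{q-p}-\tau^{r-p}$ is strictly decreasing on $(0,\infty)$ for $q\le p<r$. For nonexistence in $(b)$ when $\lambda\le\hat\lambda_1$: testing $(P_\lambda)$ with $u$ gives $[u]_{s,p}^p=\lambda\|u\|_p^p-\|u\|_r^r<\lambda\|u\|_p^p$, which together with $[u]_{s,p}^p\ge\hat\lambda_1\|u\|_p^p$ forces $\lambda>\hat\lambda_1$. For monotonicity, $u_\mu$ is a strict subsolution of $(P_\lambda)$ when $\lambda>\mu$, so $u_\lambda\ge u_\mu$ by uniqueness, and then $u_\lambda>u_\mu$ in $\Omega$ by the strong comparison principle --- applied, after a shift $\tau\mapsto\lambda\tau^{q-1}-\tau^{r-1}+c\tau^{p-1}$ making the reaction nondecreasing on $[0,\|\bar u\|_\infty]$ (here $p\ge2$), to the operator $\fpl\cdot+c(\cdot)^{p-1}$. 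Finally, as $\lambda$ decreases to the left endpoint of the existence interval, $u_\lambda\to0$: in $(a)$ at once from $[u_\lambda]_{s,p}^p\le\lambda\|u_\lambda\|_q^q\le\lambda\|u_{\lambda_0}\|_q^q\to0$ (any fixed $\lambda_0$, by monotonicity); in $(b)$ because the decreasing, $\w$-bounded family converges, by $(S_+)$, to a solution of $(P_{\hat\lambda_1})$, which by the above is $0$.

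\emph{Case $(c)$, the threshold $\lambda_*$.} I would set $\Lambda=\{\lambda>0:(P_\lambda)\text{ solvable}\}$. Because $r>q$ one has $\frac1r\|u^+\|_r^r-\frac\lambda q\|u^+\|_q^q\ge-C(\lambda)$, so $J_\lambda$ is coercive and weakly lower semicontinuous and attains its infimum; since $J_\lambda(t_0\phi)<0$ for a fixed $\phi>0$ and $\lambda$ large, that global minimizer is nontrivial and $\Lambda\ne\emptyset$. If $\lambda\in\Lambda$, with solution $w$, and $\mu>\lambda$, then $w$ is a strict subsolution of $(P_\mu)$, which paired with a large multiple of $w_1$ (dominating $w$, as $w\asymp\ds\asymp w_1$) yields a solution of $(P_\mu)$; hence $\Lambda$ is unbounded above and $\lambda_*:=\inf\Lambda$ satisfies $(\lambda_*,\infty)\subseteq\Lambda$. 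That $\lambda_*>0$: one computes $\max_{\tau>0}(\lambda\tau^{q-p}-\tau^{r-p})=C_0\lambda^{(r-p)/(r-q)}$ with exponent $>1$, so for $\lambda$ small this is $<\hat\lambda_1$, i.e.\ $\lambda\tau^{q-1}-\tau^{r-1}<\hat\lambda_1\tau^{p-1}$ for all $\tau>0$; testing $(P_\lambda)$ with $u$ then contradicts $[u]_{s,p}^p\ge\hat\lambda_1\|u\|_p^p$. That $\lambda_*\in\Lambda$: take $\lambda_n\downarrow\lambda_*$ with solutions $u_n$, bounded in $L^\infty$ by a fixed supersolution and hence in $\w$; if $[u_n]_{s,p}\to0$ along a subsequence then, writing $u_n=t_n\hat u_n$ with $[\hat u_n]_{s,p}=1$ and testing with $u_n$, one gets $1=\lambda_n t_n^{q-p}\|\hat u_n\|_q^q-t_n^{r-p}\|\hat u_n\|_r^r\to0$, a contradiction; so $\liminf[u_n]_{s,p}>0$, and by $(S_+)$, $u_n\to u_*$ strongly in $\w$ with $u_*$ a positive solution of $(P_{\lambda_*})$. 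Thus $\Lambda=[\lambda_*,\infty)$.

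\emph{Case $(c)$, the two solutions, and the main obstacle.} For $\lambda>\lambda_*$ I would take $u_\lambda$ to be the maximal solution of $(P_\lambda)$ (it exists, being dominated by a fixed multiple of $w_1$), which one checks is a local minimizer of $J_\lambda$ in $\cs$, hence in $\w$ by \cite{IMS3}. For $\lambda>\mu>\lambda_*$, $u_\mu$ is a strict subsolution of $(P_\lambda)$, so some solution of $(P_\lambda)$ lies above it, whence $u_\lambda\ge u_\mu$; and $u_\lambda\ge u_*$ likewise. Strictness $u_\lambda>u_\mu$ in $\Omega$ is exactly where the new strong comparison principle is needed: because the reaction is non-monotone one shifts it by $c\tau^{p-1}$ as above and then applies the strong comparison --- with Hopf-type control of $u_\lambda/\ds$ versus $u_\mu/\ds$ near $\partial\Omega$ --- to $\fpl\cdot+c(\cdot)^{p-1}$. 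As $\lambda\downarrow\lambda_*$ the decreasing family $u_\lambda$ converges, by $(S_+)$, to a solution of $(P_{\lambda_*})$ squeezed between $u_*$ and the maximal solution there, so choosing $u_*$ maximal gives $u_\lambda\to u_*$. For the second solution $v_\lambda$: $0$ is a strict local minimizer of $J_\lambda$ with $J_\lambda(0)=0$, while, exploiting a solution at some $\mu\in(\lambda_*,\lambda)$, the functional $J_\lambda$ truncated above at $u_\lambda$ (all of whose critical points then lie in $[0,u_\lambda]$ and solve $(P_\lambda)$) takes negative values; a mountain pass between these two valleys --- with Palais--Smale granted by coercivity of the truncated functional --- produces a critical point $v_\lambda\neq0$ at positive level, distinct from $u_\lambda$, and the strong comparison principle then gives $0<v_\lambda<u_\lambda$ in $\Omega$. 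I expect the genuine difficulty to lie in this strong comparison principle for the \emph{degenerate} ($p>2$) nonlocal operator $\fpl\cdot+c(\cdot)^{p-1}$: degeneracy rules out linearization and Harnack-type arguments, the nonlocal tails must be controlled, and the global weighted regularity of \cite{IMS2} is indispensable --- both to place solutions in $\cs$ and to build the boundary barriers comparing $u_\lambda/\ds$ with $u_\mu/\ds$.
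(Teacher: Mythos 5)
Your architecture tracks the paper's quite closely, and several of your sub-arguments are legitimate alternatives that in places are cleaner than what the paper does: the sub-/supersolution pair $\eps\hat e_1$, $tw_1$ in cases $(a)$--$(b)$ replaces the paper's direct minimization of $\Phi_\lambda$ (the paper gets nontriviality from $\Phi_\lambda(\tau\hat u_1)<0$ for small $\tau$ and positivity from \eqref{pnp} plus Theorem \ref{smp}); your quantitative bound $\max_{\tau>0}(\lambda\tau^{q-p}-\tau^{r-p})=C_0\lambda^{(r-p)/(r-q)}<\hat\lambda_1$ for small $\lambda$ is a sharper version of the paper's construction of $\lambda_0$ in Lemma \ref{exi}; and your normalization argument at $\lambda_*$ ($1=\lambda_nt_n^{q-p}\|\hat u_n\|_q^q-t_n^{r-p}\|\hat u_n\|_r^r\to 0$) is simpler than the weak-convergence argument in Lemma \ref{str}. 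The nonexistence tests, the coercivity of $J_\lambda$, the half-line structure of $\Lambda$, and the mountain-pass scheme with the upper truncation at $u_\lambda$ all match the paper (Lemmas \ref{exi}--\ref{str}, via \cite[Theorem 2.1]{PS} and Propositions \ref{stm}, \ref{svh}).

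There are, however, two genuine gaps. First, and decisively, every strict inequality in the statement ($u_\lambda>u_\mu$, $u_\lambda>v_\lambda$ in $\Omega$) rests on a strong comparison principle that you invoke but do not prove; you correctly flag it as the main obstacle, but it is precisely the paper's central new contribution (Theorem \ref{scp}, together with the maximum principle Theorem \ref{smp}). The existing result of \cite{J} does not close this gap: in the degenerate case it forces roughly $s\le 1/p'$, and it only yields positivity of $u-v$ in $\Omega$, whereas the argument for the second solution needs the stronger conclusion $u_\lambda-u_\mu\in{\rm int}(\cs_+)$. The paper proves Theorem \ref{scp} by an explicit barrier: glue $\eta v$ (for $\eta>1$ close to $1$) outside a small ball to $u$ inside it, control the error via the nonlocal superposition principle of \cite[Proposition 2.6]{IMS2}, and conclude $u\ge\eta v$ by $T$-monotonicity; your monotone shift $g(t)\mapsto g(t)+ct^{p-1}$ is subsumed by the Jordan decomposition step there. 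Second, your claim that the \emph{maximal} solution is a local minimizer of $J_\lambda$ in $\cs$ is unsubstantiated: there is no reason a maximal solution minimizes anything, and the existence of a maximal solution itself requires the upward directedness of the solution set, which you do not address. The paper's route (Lemmas \ref{mon}, \ref{two}) is to define $u_\lambda$ as the \emph{global} minimizer of the functional truncated below at $u_\mu$, and then to use $u_\lambda-u_\mu\in{\rm int}(\cs_+)$ --- again the strong comparison principle --- to see that the truncation is inactive on the $\cs$-neighborhood $V=\{u_\mu+v:\,v\in{\rm int}(\cs_+)\}$ of $u_\lambda$, so that $u_\lambda$ is a $\cs$-local, hence by Proposition \ref{svh} a $\w$-local, minimizer of $\Phi_\lambda$. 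Substituting that choice of $u_\lambda$ for your maximal one repairs this part of your argument with no other changes.
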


\noindent
More precise statements of the results above can be found in the subsequent Theorems \ref{sub}, \ref{equ}, and \ref{sup}. Our approach is variational, based on critical point theory and comparison-truncation arguments. For the sub- and equidiffusive cases we apply direct minimization and the weak comparison result of \cite{IL} for uniqueness. In the superdiffusive case, we prove a bifurcation result and detect via the mountain pass theorem a second solution for all $\lambda>\lambda_*$.
\vskip2pt
\noindent
We remark that our result is new even in the semiliear case $p=2$ (fractional Laplacian) and in the local case $s=1$ (classical $p$-Laplacian). Bifurcation theorems are proved in \cite{CM} for the superdiffusive logistic equation driven by the square root of the Laplacian, and in \cite{IP} for the classical $p$-Laplacian, but with no information about monotonicity, order between solutions, and convergence. Also, existence and uniqueness for the equidiffusive case with the fractional Laplacian are proved in \cite{QX}.
\vskip2pt
\noindent
A crucial role in our arguments is played by new strong maximum and comparison principles for weak sub- and supersolutions, including a Hopf type property (see Theorems \ref{smp}, \ref{scp}). Previous results of this type were proved in \cite{DQ} and in \cite{J}, respectively, but our versions involve very general reactions and milder restrictions on the constants $p$, $s$ and can be of general interest, since they care applicable to a wide class of problems driven by the fractional $p$-Laplacian.
\vskip4pt
\noindent
{\bf Structure of the paper:} In Section \ref{sec2} we recall some preliminary results and prove new maximum and comparison principles (Subsection \ref{ss1}); in Section \ref{sec3} we deal with the subdiffusive case; in Section \ref{sec4} we deal with the equidiffusive case; and in Section \ref{sec5} we deal with the superdiffusive case.
\vskip4pt
\noindent
{\bf Notation:} For any $a\in\R$, $\nu>0$ we set $a^\nu=|a|^{\nu-1}a$. For any $A\subset\R^N$ we shall set $A^c=\R^N\setminus A$ and denote by $|A|$ the Lebesgue measure of $A$. For any two measurable functions $u,v:\Omega\to\R$, $u\le v$ will mean that $u(x)\le v(x)$ for a.e.\ $x\in\Omega$ (and similar expressions). The positive (resp., negative) part of $u$ is denoted $u^+$ (resp., $u^-$). Every function $u$ defined in $\Omega$ will be identified with its $0$-extension to $\R^N$. If $X$ is an ordered function space, then $X_+$ will denote its non-negative order cone. For all $\nu\in[1,\infty]$, $\|\cdot\|_\nu$ denotes the standard norm of $L^\nu(\Omega)$ (or $L^\nu(\R^N)$, which will be clear from the context). Moreover, $C$ will denote a positive constant whose value may change case by case.

\section{Preliminaries}\label{sec2}

\noindent
Problem $(P_\lambda)$ falls into the following class of Dirichlet problems for the fractional $p$-Laplacian:
\beq\label{dir}
\begin{cases}
\fpl u = f(x,u) & \text{in $\Omega$} \\
u = 0 & \text{in $\Omega^c$,}
\end{cases}
\eeq
where $\Omega$, $p$, $s$ are as in Section \ref{sec1} and the general reaction $f$ satisfies the following hypothesis:
\begin{itemize}[leftmargin=1cm]
\item[${\bf H}$] $f:\Omega\times\R\to\R$ is a Carath\'eodory function s.t.\ for a.e.\ $x\in\Omega$ and all $t\in\R$
\[|f(x,t)| \le c_0(1+|t|^{r-1}) \qquad (c_0>0,\,r\in (p,\p)).\]
\end{itemize}
A variational theory for problem \eqref{dir} was established in the recent literature (see for instance \cite{FI,ILPS,IMS3}). For the reader's convenience, we recall here some of its main features. First, for all measurable $u:\Omega\to\R$ we introduce the Gagliardo seminorm
\[[u]_{s,p, \Omega} = \Big[\iint_{\Omega\times\Omega}\frac{|u(x)-u(y)|^p}{|x-y|^{N+ps}}\,dx\,dy\Big]^\frac{1}{p}\]
setting $[u]_{s, p, \R^{N}}=[u]_{s, p}$. Then, we define the fractional Sobolev spaces
\[W^{s,p}(\Omega) = \big\{u\in L^p(\Omega):\,[u]_{s,p, \Omega}<\infty\big\},\]
\[\w = \big\{u\in W^{s,p}(\R^N):\,u=0 \ \text{in $\Omega^c$}\big\},\]
the latter being a uniformly convex, separable Banach space with norm $\|u\|=[u]_{s,p}$, whose dual space is denoted by $W^{-s,p'}(\Omega)$ (see \cite{DPV}). The embedding $\w\hookrightarrow L^\nu(\Omega)$ is continuous for all $\nu\in[1,p^*_s]$ and compact for all $q\in[1,p^*_s)$. We also recall from \cite[Definition 2.1]{IMS1} the following special space:
\[\widetilde{W}^{s,p}(\Omega) = \Big\{u\in L^p_{\rm loc}(\R^N):\,\exists\,U\Supset\Omega \ \text{s.t. $u\in W^{s,p}(U)$,}\,\int_{\R^N}\frac{|u(x)|^{p-1}}{(1+|x|)^{N+ps}}\, dx<\infty\Big\}.\]
By \cite[Lemma 2.3]{IMS1}, we can define the fractional $p$-Laplacian as a nonlinear operator $\fpl:\widetilde{W}^{s,p}(\Omega)\to W^{-s,p'}(\Omega)$ by setting for all $u,v\in\w$
\[\langle \fpl u,v\rangle = \iint_{\R^N\times\R^N} \frac{(u(x)-u(y))^{p-1}(v(x)-v(y))}{|x-y|^{N+ps}}\,dx\,dy\]
(with the convention $a^{p-1}=|a|^{p-2}a$ established above). Such definition is equivalent to the one given in Section \ref{sec1} as soon as $u$ is smooth enough (for instance, if $u\in\mathcal{S}(\R^N)$).
\vskip2pt
\noindent
Clearly $\w\subset\widetilde{W}^{s,p}(\Omega)$. The restricted operator $\fpl:\w\to W^{-s,p'}(\Omega)$ is continuous, maximal monotone, and enjoys the $(S)_+$-property, namely, whenever $(u_n)$ is a sequence in $\w$ s.t.\ $u_n\rightharpoonup u$ in $\w$ and
\[\limsup_n\langle\fpl u_n,u_n-u\rangle \le 0,\]
then $u_n\to u$ in $\w$ (see \cite[Lemma 2.1]{FI}). From \cite[Lemma 2.1]{IL}, for all $u\in\w$ we have
\beq\label{pnp}
\|u^\pm\|^p \le \langle\fpl u,\pm u^\pm\rangle.
\eeq
Another useful property, referred to as strict $T$-monotonicity, of $\fpl$ is the following (see \cite[proof of Lemma 3.2]{FI}):

\begin{proposition}\label{stm}
Let $u,v\in\widetilde{W}^{s,p}(\Omega)$ s.t.\ $(u-v)^+\in\w$ satisfy
\[\langle\fpl u-\fpl v,(u-v)^+\rangle \le 0.\]
Then, $u\le v$ in $\Omega$.
\end{proposition}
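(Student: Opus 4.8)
The plan is to expand the duality pairing as a double integral and reduce the claim to a pointwise inequality obtained by a case analysis on the signs of $u-v$ at the two integration points. Set $w=(u-v)^+$, which by hypothesis lies in $\w$, so in particular $w=0$ in $\Omega^c$. Since $\fpl$ maps $\widetilde{W}^{s,p}(\Omega)$ continuously into $W^{-s,p'}(\Omega)$, the quantity $\langle\fpl u-\fpl v,w\rangle$ is finite and equals
\[\iint_{\R^N\times\R^N}\frac{\big[(u(x)-u(y))^{p-1}-(v(x)-v(y))^{p-1}\big]\,(w(x)-w(y))}{|x-y|^{N+ps}}\,dx\,dy.\]
For fixed $x,y\in\R^N$ I abbreviate $a=u(x)-v(x)$, $b=u(y)-v(y)$, $A=u(x)-u(y)$, $B=v(x)-v(y)$, so that $A-B=a-b$ and $w(x)-w(y)=a^+-b^+$; thus the numerator of the integrand above equals $J(x,y):=(A^{p-1}-B^{p-1})(a^+-b^+)$.

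Next I would check that $J\ge 0$ everywhere, using that $t\mapsto t^{p-1}$ is strictly increasing together with the elementary inequality $(\alpha^{p-1}-\beta^{p-1})(\alpha-\beta)\ge c_p\,|\alpha-\beta|^p$, valid for all $\alpha,\beta\in\R$ when $p\ge 2$ (this is the only place the restriction $p\ge 2$ enters). Indeed: if $a>0$ and $b>0$ then $a^+-b^+=a-b=A-B$, so $J(x,y)=(A^{p-1}-B^{p-1})(A-B)\ge c_p|a-b|^p\ge 0$; if $a\le 0$ and $b\le 0$ then $a^+=b^+=0$ and $J(x,y)=0$; and if exactly one of $a,b$ is positive, say $a>0\ge b$, then $A-B=a-b\ge a>0$ forces $A>B$, hence $A^{p-1}-B^{p-1}>0$, which has the same strict sign as $a^+-b^+=a>0$, so $J(x,y)>0$ (and symmetrically if $b>0\ge a$). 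Therefore the integral above is nonnegative, and combined with the assumption $\langle\fpl u-\fpl v,w\rangle\le 0$ it follows that $J(x,y)=0$ for a.e.\ $(x,y)\in\R^N\times\R^N$.

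It remains to upgrade $J\equiv 0$ to $u\le v$ in $\Omega$. Let $D=\{x\in\R^N:\,u(x)>v(x)\}$; since $w=(u-v)^+\in\w$ vanishes in $\Omega^c$, up to a null set $D\subseteq\Omega$, so $|D|<\infty$. Suppose, for contradiction, that $|D|>0$. Restricting the first case above to the positive-measure set $D\times D$ and using $J\ge c_p|a-b|^p$ there gives $u(x)-v(x)=u(y)-v(y)$ for a.e.\ $(x,y)\in D\times D$, so by Fubini $u-v$ equals, a.e.\ on $D$, a constant $\ell>0$. But then for $(x,y)\in D\times D^c$ we are in the mixed-sign case with $a=\ell>0\ge b$, whence $J(x,y)>0$ on a set of positive measure (note $|D^c|=\infty$), contradicting $J=0$ a.e. Hence $|D|=0$, that is, $u\le v$ in $\Omega$. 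The only delicate point is the mixed-sign case: there the quantitative bound $J\ge c_p|a^+-b^+|^p$ fails, because the multiplier $a$ is strictly smaller than $a-b$; fortunately the final contradiction only requires $J>0$ strictly on $D\times D^c$, which does hold, so this is not a genuine obstruction.
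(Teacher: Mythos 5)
Your argument is correct, and it is worth noting that the paper itself does not prove Proposition \ref{stm}: it simply cites the proof of Lemma 3.2 in \cite{FI}, so what you have written is a self-contained substitute rather than a variant of an in-paper proof. The substance is the same as in the cited source --- expand the pairing as a double integral and exploit a sign property of the numerator $J(x,y)=\big[(u(x)-u(y))^{p-1}-(v(x)-v(y))^{p-1}\big]\big(w(x)-w(y)\big)$ with $w=(u-v)^+$ --- but the standard route is more quantitative and shorter at the end: one proves the single pointwise inequality
\[
\big(A^{p-1}-B^{p-1}\big)\big(a^+-b^+\big)\;\ge\;|a^+-b^+|^p
\qquad\text{whenever } A-B=a-b,
\]
(the same inequality underlying \eqref{pnp}, see \cite[Lemma 2.1]{IL}), which immediately gives $\|(u-v)^+\|^p\le\langle\fpl u-\fpl v,(u-v)^+\rangle\le 0$ and hence $(u-v)^+=0$ in $\w$, with no need for the measure-theoretic contradiction on $D\times D^c$. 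Your qualitative version (nonnegativity of $J$ everywhere plus strict positivity on the mixed-sign set, then $|D|\,|D^c|=0$ with $|D^c|=\infty$) is valid; the redundant detour through ``$u-v$ is constant on $D$'' can be deleted, since $a>0$ on $D$ by definition. Two small remarks: only the strict monotonicity of $t\mapsto|t|^{p-2}t$ is actually used, so nothing in your argument genuinely requires $p\ge 2$; and your closing caveat is unnecessary, because in the mixed case one has $A^{p-1}-B^{p-1}\ge c_p(a-b)^{p-1}\ge c_p\,a^{p-1}$, so the quantitative bound $J\ge c_p|a^+-b^+|^p$ does hold there as well --- which is precisely why the one-line quantitative proof goes through.
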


\noindent
We say that $u\in\widetilde{W}^{s,p}(\Omega)$ is a (weak) supersolution of \eqref{dir} if $u\ge 0$ in $\Omega^c$ and for all $v\in\w_+$
\[\langle\fpl u,v\rangle \ge \int_\Omega f(x,u)v\,dx,\]
and similarly we define a (weak) subsolution. Finally, $u\in\w$ is a (weak) solution of \eqref{dir} if it is both a super- and a subsolution, i.e., if for all $v\in\w$
\[\langle\fpl u,v\rangle = \int_\Omega f(x,u)v\,dx.\]
In such cases we write that weakly in $\Omega$
\[\fpl u = \ (\ge,\,\le) \ f(x,u).\]
From \cite[Theorem 3.3]{CMS} we have the following a priori bound on the solutions:

\begin{proposition}\label{apb}
Let ${\bf H}$ hold, $u\in\w$ be a solution of \eqref{dir}. Then, $u\in L^\infty(\Omega)$ with $\|u\|_\infty\le C(\|u\|)$.
\end{proposition}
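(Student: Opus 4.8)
The plan is to obtain the bound by a Moser-type iteration on the level sets of $u$, using crucially the strict subcriticality $r<\p$ built into hypothesis ${\bf H}$. First I would reduce everything to an a priori bound on $u^+$: since $\tilde u=-u$ is a solution of \eqref{dir} with reaction $\tilde f(x,t)=-f(x,-t)$, which again satisfies ${\bf H}$ with the same constants $c_0,r$, a bound $\|u^+\|_\infty\le C(\|u\|)$ valid for every such solution automatically gives $\|u^-\|_\infty\le C(\|u\|)$, hence the claim.

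Next I would establish the basic energy inequality. For $k\ge 0$ set $A_k=\{x\in\Omega:u(x)>k\}$ and $w_k=(u-k)^+$; since $u\in\w$ and $k\ge 0$ we have $w_k\in\w$ (truncation is $1$-Lipschitz, so $[w_k]_{s,p}\le[u]_{s,p}$, and $w_k=0$ on $\Omega^c$), so $w_k$ is an admissible test function. The elementary pointwise inequality
\[(a-b)^{p-1}\big((a-k)^+-(b-k)^+\big)\ge\big|(a-k)^+-(b-k)^+\big|^p\qquad(a,b\in\R,\ k\ge 0),\]
checked by taking $a\ge b$ and combining monotonicity of $t\mapsto t^{p-1}$ on $[0,\infty)$ with $0\le(a-k)^+-(b-k)^+\le a-b$ (this also covers pairs with $x$ or $y$ in $\Omega^c$, where $w_k$ vanishes), yields
\[[w_k]_{s,p}^p\le\langle\fpl u,w_k\rangle=\int_\Omega f(x,u)\,w_k\,dx\le c_0\int_{A_k}\big(1+u^{r-1}\big)w_k\,dx.\]
Testing instead with truncated powers of $w_k$ produces the analogous family of Caccioppoli-type estimates that drive the iteration. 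Note that the nonlocal ``tail'' interactions across $\Omega^c$ (and, more generally, between far-apart level sets) carry a favourable sign in this computation — precisely what the pointwise inequality encodes — so they are harmless.

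Then I would run the iteration. Combining the energy inequality with the fractional Sobolev inequality $\|v\|_{\p}\le C\,[v]_{s,p}$ for $v\in\w$ and H\"older's inequality, and starting from the integrability $u\in L^{\p}(\Omega)$ with $\|u\|_{\p}\le C\|u\|$, one obtains, for a suitable choice of the exponents, a recursion upgrading $u\in L^{q_n}(\Omega)$ to $u\in L^{q_{n+1}}(\Omega)$ with $q_{n+1}$ larger than $q_n$ by a definite factor; because $r<\p$ the sequence $(q_n)$ increases to $+\infty$. Tracking how the iteration constants accumulate (they grow geometrically in $n$ but, after the usual normalization, the resulting product converges) gives $\|u\|_\infty\le C\big(\|u\|^\gamma+1\big)$ for some $\gamma=\gamma(N,p,s,r)>0$, i.e.\ $\|u\|_\infty\le C(\|u\|)$. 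Equivalently, once enough integrability has been gained one may close with a standard De Giorgi iteration lemma applied to $\phi(k)=|A_k|$, whose recursion $\phi(h)\le C(h-k)^{-\p}\phi(k)^\beta$ then has gain exponent $\beta>1$.

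The main obstacle is the behaviour of the iteration when $r$ is close to $\p$: in that regime a single truncation does not yet produce a super-linear gain, so one genuinely needs the infinite bootstrap of integrability together with a careful bookkeeping of the constants, ensuring the estimate does not blow up in the limit. Since all of this is carried out in \cite[Theorem 3.3]{CMS}, I would only sketch the argument and refer there for the details.
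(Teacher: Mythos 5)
Your proposal is correct and matches the paper, which establishes Proposition \ref{apb} simply by invoking \cite[Theorem 3.3]{CMS}; your concluding deferral to that same reference is exactly what the authors do. The added sketch (reduction to $u^+$, testing with $(u-k)^+$ via the pointwise inequality $(a-b)^{p-1}\big((a-k)^+-(b-k)^+\big)\ge\big|(a-k)^+-(b-k)^+\big|^p$, and a De Giorgi/Moser iteration exploiting $r<\p$) is a sound outline of the argument carried out there.
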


\noindent
Classical nonlinear regularity theory does not apply to fractional order equations, whose solutions fail to be $C^1$ in general. Nevertheless, weighted H\"older continuity can replace higher smoothness in most cases. We set ${\rm d}_\Omega(x)={\rm dist}(x,\Omega^c)$ for all $x\in\R^N$ and define the following space:
\[\cs = \Big\{u \in C^0(\overline{\Omega}): \frac{u}{\ds} \ \text{has a continuous extension to} \ \overline{\Omega} \Big\},\]
a Banach space under the norm $\|u\|_{0,s}=\|u/\ds\|_\infty$. By \cite[Lemma 5.1]{ILPS}, the positive order cone $\cs_+$ has a nonempty interior
\[{\rm int}(\cs_+) = \Big\{u\in\cs:\,\inf_\Omega\frac{u}{\ds}>0\Big\}.\]
Similarly, for any $\alpha\in(0,1)$ we set
\[C_s^{\alpha}(\overline{\Omega})= \Big\{u \in C^0(\overline{\Omega}): \frac{u}{\ds} \ \text{has a $\alpha$-H\"older continuous extension to} \ \overline{\Omega} \Big\},\]
a Banach space under the norm
\[\|u\|_{\alpha,s}= \|u\|_{0,s} + \sup_{x \neq y} \frac{|u(x)/\ds(x) - u(y)/\ds(y)|}{|x-y|^{\alpha}}.\]
By the Ascoli-Arzel\`a theorem, $C_s^{\alpha}(\overline{\Omega})\hookrightarrow\cs$ with compact embedding for all $\alpha\in(0,1)$. From Proposition \ref{apb} and \cite[Theorem 1.1]{IMS2} we have the following weighted H\"older regularity result:

\begin{proposition}\label{reg}
Let ${\bf H}$ hold, $u\in\w$ be a solution of \eqref{dir}. Then there exists $\alpha\in(0,s]$, independent of $u$, s.t.\ $u\in C^\alpha_s(\overline\Omega)$ and $\|u\|_{\alpha,s}\le C(\|u\|)$.
\end{proposition}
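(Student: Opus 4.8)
The plan is to bootstrap the a priori $L^\infty$ bound into the global weighted H\"older estimate of \cite{IMS2}. First I would observe that, by Proposition \ref{apb}, the solution $u$ lies in $L^\infty(\Omega)$ with $\|u\|_\infty\le C(\|u\|)$. Using hypothesis ${\bf H}$, this immediately yields that the function $g(x):=f(x,u(x))$, which is measurable since $f$ is Carath\'eodory and $u$ is measurable, satisfies for a.e.\ $x\in\Omega$
\[|g(x)|\le c_0\big(1+\|u\|_\infty^{r-1}\big),\]
so that $g\in L^\infty(\Omega)$ with $\|g\|_\infty\le C(\|u\|)$. Hence $u\in\w$ is a weak solution of the Dirichlet problem
\[\fpl u=g \ \text{in $\Omega$,}\qquad u=0 \ \text{in $\Omega^c$,}\]
with a bounded right-hand side whose norm is controlled by $\|u\|$.

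Next I would apply \cite[Theorem 1.1]{IMS2}, which provides global weighted H\"older regularity for solutions of such problems: there exists $\alpha\in(0,s]$, depending only on $N$, $p$, $s$, and $\Omega$ (and, crucially, not on the particular solution), such that $u/\ds$ admits an $\alpha$-H\"older continuous extension to $\overline\Omega$, i.e.\ $u\in C^\alpha_s(\overline\Omega)$, together with an estimate of the form $\|u\|_{\alpha,s}\le C\big(\|g\|_\infty\big)$ with $C$ nondecreasing in its argument. Chaining this with the bound $\|g\|_\infty\le C(\|u\|)$ obtained in the previous step produces $\|u\|_{\alpha,s}\le C(\|u\|)$, which is the desired conclusion.

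The only point requiring care is the assertion that the exponent $\alpha$ is independent of $u$: this is already built into the statement of \cite[Theorem 1.1]{IMS2}, where the H\"older exponent is determined solely by the structural parameters, while all the dependence on the specific solution is confined to the multiplicative constant, itself a nondecreasing function of $\|g\|_\infty$ and hence of $\|u\|$. Thus there is no substantive obstacle here; the genuine regularity work is carried out in \cite{IMS2}, and the proof reduces to verifying that the chain of dependencies $\|u\|\mapsto\|u\|_\infty\mapsto\|g\|_\infty\mapsto\|u\|_{\alpha,s}$ closes up as claimed.
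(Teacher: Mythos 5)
Your argument is correct and is precisely the route the paper takes: it presents Proposition \ref{reg} as an immediate consequence of the $L^\infty$ bound of Proposition \ref{apb} (which via hypothesis ${\bf H}$ makes the right-hand side $f(x,u)$ bounded in terms of $\|u\|$) combined with the global weighted H\"older estimate of \cite[Theorem 1.1]{IMS2}. Nothing to add.
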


\noindent
From \cite[Proposition 2.8]{IL} we have the following weak comparison principle under a special monotonicity assumption of Brezis-Oswald type:

\begin{proposition}\label{wcp}
Let ${\bf H}$ hold and assume that
\[t\mapsto\frac{f(x,t)}{t^{p-1}}\]
is decreasing in $(0,\infty)$ for a.e.\ $x\in\Omega$. Let $u,v\in{\rm int}(\cs_+)\cap\w$ be a subsolution and a supersolution, respectively, of \eqref{dir}. Then, $u\le v$ in $\Omega$.
\end{proposition}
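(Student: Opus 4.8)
The plan is to run a Brezis--Oswald type argument, turning the monotonicity of $t\mapsto f(x,t)/t^{p-1}$ into the ordering $u\le v$ by means of two Picone--type test functions. Put $w=(u^p-v^p)^+$, so that $w\ge 0$, $w=0$ in $\Omega^c$, and $u\le v$ in $\Omega$ is equivalent to $w\equiv 0$. The plain choice $(u-v)^+$ will not work here, since $f(x,\cdot)$ itself need not be monotone; instead I would test with
\[\varphi_u=u\Big(1-(v/u)^p\Big)^+=\frac{w}{u^{p-1}},\qquad\varphi_v=v\Big((u/v)^p-1\Big)^+=\frac{w}{v^{p-1}}.\]
Since $u,v\in{\rm int}(\cs_+)$, there are $0<c_1\le c_2$ with $c_1\,\ds\le u,v\le c_2\,\ds$ in $\Omega$, so $u/v$ and $v/u$ are bounded; writing $\varphi_u=u\,\Psi_1(v/u)$ and $\varphi_v=v\,\Psi_2(u/v)$ with $\Psi_1,\Psi_2$ bounded and Lipschitz on the relevant bounded ranges, an elementary "fractional Leibniz" estimate then gives $|\varphi_u(x)-\varphi_u(y)|\le C\big(|u(x)-u(y)|+|v(x)-v(y)|\big)$ (and likewise for $\varphi_v$), whence $\varphi_u,\varphi_v\in\w_+$. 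This admissibility step is exactly where the hypothesis $u,v\in{\rm int}(\cs_+)$ enters: without the two-sided control by $\ds$ the quotients $w/u^{p-1}$, $w/v^{p-1}$ need not have finite Gagliardo energy.

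Next I would insert $\varphi_u$ into the subsolution inequality for $u$ and $\varphi_v$ into the supersolution inequality for $v$, and subtract, obtaining
\[\langle\fpl u,\varphi_u\rangle-\langle\fpl v,\varphi_v\rangle\le\int_\Omega\Big(\frac{f(x,u)}{u^{p-1}}-\frac{f(x,v)}{v^{p-1}}\Big)w\,dx.\]
The right-hand side is supported in $\{u>v\}$, where $u>v>0$, so the strict decrease of $t\mapsto f(x,t)/t^{p-1}$ makes the bracket strictly negative there; hence the right-hand side is $\le 0$, and in fact $<0$ unless $|\{u>v\}|=0$.

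The heart of the matter is the reverse estimate $\langle\fpl u,\varphi_u\rangle-\langle\fpl v,\varphi_v\rangle\ge 0$. Both pairings are absolutely convergent double integrals over $\R^N\times\R^N$ against $|x-y|^{-N-ps}$ (and the contributions from outside $\Omega\times\Omega$ cancel in the difference), so it suffices to prove the pointwise inequality
\[(a-b)^{p-1}\Big(\frac{W_1}{a^{p-1}}-\frac{W_2}{b^{p-1}}\Big)\ge(A-B)^{p-1}\Big(\frac{W_1}{A^{p-1}}-\frac{W_2}{B^{p-1}}\Big)\]
for all $a,b,A,B>0$ with $W_1=(a^p-A^p)^+$ and $W_2=(b^p-B^p)^+$ (with the convention $c^{p-1}=|c|^{p-2}c$), and then to apply it with $a=u(x)$, $b=u(y)$, $A=v(x)$, $B=v(y)$. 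This discrete Picone--type inequality I would verify by distinguishing the signs of $a-A$ and $b-B$: the mixed cases reduce to the monotonicity of $t\mapsto t^{p-1}$, while the case $a>A$, $b>B$ is the classical discrete Picone inequality. Combining with the previous step forces $\langle\fpl u,\varphi_u\rangle-\langle\fpl v,\varphi_v\rangle=0$, hence makes the integral above vanish; since that integral is strictly negative whenever $|\{u>v\}|>0$, we conclude $u\le v$ in $\Omega$.

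I expect the two technical points flagged above to be the real work: the admissibility $\varphi_u,\varphi_v\in\w$, which genuinely depends on the interior-cone hypothesis, and the pointwise Picone--type inequality, whose proof requires a careful case analysis. Everything else — the absolute convergence of the pairings, the legitimacy of the chosen test functions in the sub/supersolution inequalities, and the final sign chase — is routine.
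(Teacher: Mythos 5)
The paper does not actually prove this proposition: it is imported from \cite[Proposition 2.8]{IL} (the Brezis--Oswald type weak comparison principle mentioned in the introduction), so there is no in-paper argument to compare against. Your outline is the standard discrete-Picone/Brezis--Oswald route and is correct in its essentials: with $w=(u^p-v^p)^+$ the test functions $w/u^{p-1}$ and $w/v^{p-1}$ are admissible in $\w_+$ precisely because $u,v\in{\rm int}(\cs_+)$ forces $u/v$ and $v/u$ to be bounded (so $(u,v)$ ranges in a convex cone on which your $1$-homogeneous truncations are Lipschitz), and the pointwise inequality you isolate does hold --- the mixed-sign cases reduce to monotonicity of $t\mapsto t^{p-1}$, while the case $a>A$, $b>B$ follows by adding the two symmetric instances of the Brasco--Franzina discrete Picone inequality
\[
(a-b)^{p-1}\Big(\frac{A^p}{a^{p-1}}-\frac{B^p}{b^{p-1}}\Big)\le|A-B|^p,\qquad
(A-B)^{p-1}\Big(\frac{a^p}{A^{p-1}}-\frac{b^p}{B^{p-1}}\Big)\le|a-b|^p .
\]
Two points deserve to be made explicit rather than waved at: the pointwise inequality must be extended by continuity to pairs with $y\in\Omega^c$, where $b=B=0$ (harmless, since $W_2\le b^p$ forces the corresponding terms to vanish), and the final sign chase needs the \emph{strict} decrease of $t\mapsto f(x,t)/t^{p-1}$ on $\{u>v\}$ --- which is what the hypothesis asserts and what the paper's applications (e.g.\ $\lambda t^{q-p}-t^{r-p}$ with $q<p<r$) provide; with mere non-increase you would instead have to analyze the equality case of the Picone inequality.
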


\noindent
The energy functional for problem \eqref{dir} is defined by setting for all $u\in\w$
\[\Phi(u) = \frac{\|u\|^p}{p}-\int_\Omega F(x,u)\,dx,\]
where we have set for all $(x,t)\in\Omega\times\R$
\[F(x,t) = \int_0^t f(x,\tau)\,dx.\]
By classical results, we have $\Phi\in C^1(\w)$, and $u\in\w$ is a solution of \eqref{dir} iff $\Phi'(u)=0$ in $W^{-s,p'}(\Omega)$. Besides, by \cite[Proposition 2.1]{ILPS} $\Phi$ satisfies a bounded $(PS)$-condition, namely, whenever $(u_n)$ is a bounded sequence in $\w$ s.t.\ $(\Phi(u_n))$ is bounded in $\R$ and $\Phi'(u_n)\to 0$ in $W^{-s,p'}(\Omega)$, then $(u_n)$ has a convergent subsequence. In this connection, we recall from \cite[Theorem 1.1]{IMS3} the following equivalence principle for Sobolev and H\"older local minimizers of $\Phi$:

\begin{proposition}\label{svh}
Let ${\bf H}$ hold, $u\in\w$. Then, the following are equivalent:
\begin{enumroman}
\item\label{svh1} there exists $\rho>0$ s.t.\ $\Phi(u+v)\ge\Phi(u)$ for all $v\in\w\cap C_s^0(\overline\Omega)$, $\|v\|_{0,s}\le\rho$;
\item\label{svh2} there exists $\sigma>0$ s.t.\ $\Phi(u+v)\ge\Phi(u)$ for all $v\in\w$, $\|v\|\le\sigma$.
\end{enumroman}
\end{proposition}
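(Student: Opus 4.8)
The plan is to establish the nontrivial implication $\ref{svh1}\Rightarrow\ref{svh2}$, the reverse one being softer. For $\ref{svh2}\Rightarrow\ref{svh1}$: suppose $u$ is a $\w$-local minimizer and, for contradiction, that there are $v_n\in\w\cap\cs$ with $\|v_n\|_{0,s}\to 0$ and $\Phi(u+v_n)<\Phi(u)$. Since $\ds$ is bounded on $\Omega$, $|v_n|\le\|v_n\|_{0,s}\,\ds\to 0$ uniformly, so $v_n\to 0$ in every $L^\nu(\Omega)$ and, as $r<\p$, dominated convergence gives $\int_\Omega F(x,u+v_n)\,dx\to\int_\Omega F(x,u)\,dx$. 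The energy inequality then forces $(\|u+v_n\|)$ bounded with $\limsup_n\|u+v_n\|\le\|u\|$, while $u+v_n\rightharpoonup u$ in $\w$ (the weak limit necessarily coinciding with the $L^p$-limit $u$); weak lower semicontinuity gives $\|u+v_n\|\to\|u\|$, and uniform convexity of $\w$ upgrades this to $v_n\to 0$ in $\w$ — contradicting $\Phi(u+v_n)<\Phi(u)$ for $n$ large.

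For $\ref{svh1}\Rightarrow\ref{svh2}$ I would run the contradiction scheme of Brezis--Nirenberg and Garc\'ia Azorero--Manfredi--Peral, adapted to the fractional operator. Since $C^\infty_c(\Omega)\subset\w\cap\cs$ is dense in $\w$, a $\cs$-local minimizer $u$ is a critical point, $\fpl u=f(x,u)$ weakly, hence $u\in C_s^\alpha(\overline\Omega)\subset\cs$ by Proposition \ref{reg}. Assume $u$ is \emph{not} a $\w$-local minimizer; then for every small $\eps>0$ the infimum $m_\eps$ of $\Phi$ over $\bar B_\eps(u)=\{v\in\w:\|v-u\|\le\eps\}$ is strictly below $\Phi(u)$. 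As $r<\p$, the functional $v\mapsto\int_\Omega F(x,v)\,dx$ is sequentially weakly continuous on $\w$ (by the compact embeddings $\w\hookrightarrow\hookrightarrow L^\nu(\Omega)$, $\nu<\p$, and the growth in ${\bf H}$), so $\Phi$ is sequentially weakly lower semicontinuous and $m_\eps$ is attained at some $u_\eps\in\bar B_\eps(u)$ with $\Phi(u_\eps)<\Phi(u)$; since $\|u_\eps-u\|\le\eps$, necessarily $u_\eps\to u$ in $\w$ as $\eps\to 0^+$.

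The key step is to upgrade this to $u_\eps\to u$ in $\cs$: once available, for small $\eps$ one has $\|u_\eps-u\|_{0,s}\le\rho$, hence $\Phi(u_\eps)\ge\Phi(u)$ by $\ref{svh1}$, the sought contradiction. To this end I would use the Euler--Lagrange relation for the constrained minimizer $u_\eps$. If $\|u_\eps-u\|<\eps$, then $u_\eps$ is a free local minimizer, so $\fpl u_\eps=f(x,u_\eps)$ weakly, and Propositions \ref{apb}--\ref{reg} applied to the reaction $f(x,u_\eps)$ — which satisfies ${\bf H}$ uniformly because $\|u_\eps\|\le\|u\|+\eps$ — give a uniform bound $\|u_\eps\|_{\alpha,s}\le C$; then the compact embedding $C_s^\alpha(\overline\Omega)\hookrightarrow\cs$, together with the known convergence in $\w$, forces $u_\eps\to u$ in $\cs$ along subsequences, and hence for the whole family. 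The delicate case, which I expect to be the main obstacle, is the active constraint $\|u_\eps-u\|=\eps$: here a Lagrange multiplier $\mu_\eps\ge 0$ enters and
\[\fpl u_\eps+\mu_\eps\,\fpl(u_\eps-u)=f(x,u_\eps)\qquad\text{weakly in }\Omega,\]
where the term $\mu_\eps\,\fpl(u_\eps-u)$ belongs only to $W^{-s,p'}(\Omega)$, so Proposition \ref{reg} does not apply directly. I would attack this by testing the relation against $u_\eps-u$ and against $u_\eps$ — exploiting that $u$ itself solves \eqref{dir} and that $u_\eps-u\to 0$ in $\w$ — to control $\mu_\eps$, so that $\mu_\eps\,\fpl(u_\eps-u)\to 0$ in $W^{-s,p'}(\Omega)$; then, using the $(p-1)$-homogeneity of $\fpl$ to absorb the multiplier into a rescaling, the $(S)_+$-property, and a stability version of the weighted H\"older estimate of Proposition \ref{reg}, one should recover the uniform $C_s^\alpha$-bound and close the argument exactly as in the free case. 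Throughout, the subcriticality $r<\p$ is what makes the lower-order term weakly continuous and keeps Propositions \ref{apb}--\ref{reg} at our disposal.
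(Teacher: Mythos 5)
The paper does not prove this proposition at all: it is quoted verbatim from \cite[Theorem 1.1]{IMS3}, so the only honest comparison is with the strategy of that reference, which is indeed the Brezis--Nirenberg/Garc\'ia Azorero--Manfredi--Peral scheme you describe. Your argument for \ref{svh2}$\Rightarrow$\ref{svh1} (which is genuinely non-trivial here, since $\|v\|_{0,s}$ small does not force $\|v\|$ small) is correct: uniform convergence $v_n\to 0$, convergence of the potential term, the resulting squeeze $\|u+v_n\|\to\|u\|$ combined with $u+v_n\rightharpoonup u$, and uniform convexity do yield $v_n\to 0$ in $\w$. The set-up of \ref{svh1}$\Rightarrow$\ref{svh2} (constrained minimizers $u_\eps$ on $\bar B_\eps(u)$ with $\Phi(u_\eps)<\Phi(u)$, reduction to proving $u_\eps\to u$ in $\cs$) is also the right skeleton.

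The genuine gap is exactly where you flag ``the main obstacle,'' and neither of your proposed fixes works. Testing the Euler--Lagrange relation with $u_\eps-u$ gives
\[\langle\fpl u_\eps,u_\eps-u\rangle+\mu_\eps\|u_\eps-u\|^p=\int_\Omega f(x,u_\eps)(u_\eps-u)\,dx,\]
which only controls $\mu_\eps\eps^p$ (in fact shows it tends to $0$); but the quantity you need, $\|\mu_\eps\fpl(u_\eps-u)\|_{W^{-s,p'}(\Omega)}=\mu_\eps\|u_\eps-u\|^{p-1}=\mu_\eps\eps^{p-1}$, is $\eps^{-1}$ times larger and need not vanish, since nothing prevents $\mu_\eps\to\infty$. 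Likewise, $(p-1)$-homogeneity cannot ``absorb the multiplier into a rescaling'': the left-hand side $\fpl u_\eps+\mu_\eps\fpl(u_\eps-u)$ involves the operator evaluated at two \emph{different} functions, so no rescaling of $u_\eps$ turns it into a single $\fpl w$. The actual resolution in \cite{IMS3} is to divide by $1+\mu_\eps$ and treat the whole family of perturbed operators $w\mapsto\tau\fpl w+(1-\tau)\fpl(w-u)$, $\tau\in[0,1]$, proving $L^\infty$ bounds and weighted H\"older estimates \emph{uniform in $\tau$} for this family; this is a substantial extension of the regularity theory behind Propositions \ref{apb}--\ref{reg} (resting on the fine boundary analysis of \cite{IMS2}), not a ``stability version'' that can be invoked off the shelf. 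As it stands, your argument establishes the easy implication and correctly identifies, but does not close, the hard one.
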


\noindent
Regarding the spectral properties of the fractional $p$-Laplacian, we refer the reader to \cite{LL}. We just recall that the eigenvalue problem is stated as
\beq\label{evp}
\begin{cases}
\fpl u = \lambda u^{p-1} & \text{in $\Omega$} \\
u = 0 & \text{in $\Omega$.}
\end{cases}
\eeq
The principal eigenvalue of \eqref{evp} is defined as
\beq\label{pev}
\hat\lambda_1 = \inf_{u\in\w\setminus\{0\}}\frac{\|u\|^p}{\|u\|_p^p},
\eeq
and it is simple and isolated with a unique positive eigenfunction $\hat u_1\in{\rm int}(\cs_+)$ s.t.\ $\|u\|_p=1$.

\subsection{Strong maximum and comparison principles}\label{ss1}

As mentioned in Section \ref{sec1}, a strong maximum principle and a Hopf type lemma for the fractional $p$-Laplacian were proved in \cite[Theorems 1.2, 1.5]{DQ}, while a strong comparison principle was obtained in \cite[Theorem 1.1]{J}. Nevertheless, the strong comparison principle of \cite{J} does not fit with our purposes for two reasons: first, in the degenerate case $p>2$ it requires some special relations between the parameters $p$ and $s$ which, combined with the optimal H\"older continuity proved in \cite{BLS}, lead to the quite restrictive condition $s\le 1/p'$; second, the result only ensures that the difference between the super- and the subsolution is positive in $\Omega$, while we need to prove that such difference lies in ${\rm int}(\cs_+)$.
\vskip2pt
\noindent
Motivated by such difficulties, we present here a new pair of results, following an alternative approach based on the nonlocal superposition principle introduced in \cite{IMS2}. We begin with a strong maximum principle (including a Hopf type boundary property):

\begin{theorem}\label{smp}
Let $g\in C^0(\R)\cap BV_{\rm loc}(\R)$, $u\in\w\cap C^0(\overline\Omega)$, $u\not\equiv 0$ s.t.\
\[\begin{cases}
\fpl u+g(u) \ge g(0) & \text{weakly in $\Omega$} \\
u \ge 0 & \text{in $\Omega$.}
\end{cases}\]
Then,
\[\inf_\Omega\frac{u}{\ds} > 0.\]
In particular, if $u\in\cs$, then $u\in{\rm int}(\cs_+)$.
\end{theorem}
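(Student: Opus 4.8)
The plan is to reduce the statement to a pointwise differential inequality for $(-\Delta)_p^s$ alone (with no zero-order term) via the nonlocal superposition principle of \cite{IMS2}, and then to invoke the strong maximum principle and Hopf lemma of \cite[Theorems 1.2, 1.5]{DQ}. Since $g\in C^0(\R)\cap BV_{\rm loc}(\R)$, on any compact interval containing the range of $u$ (recall $u\in C^0(\overline\Omega)$, so $u$ is bounded) we may write $g=g_1-g_2$ with $g_1,g_2$ nondecreasing and continuous, hence each locally Lipschitz after mollification; the key point is that a monotone bounded perturbation of the reaction is \emph{absorbed} by the operator in the weak formulation. Concretely, I would first observe that from $\fpl u+g(u)\ge g(0)$ weakly and the fact that $t\mapsto g(t)-g(0)$ has the same sign as $t$ near $0$ when $g$ is nondecreasing, one gets $\fpl u\ge g(0)-g(u)$; splitting $g=g_1-g_2$ and testing against nonnegative $v\in\w_+$, the nondecreasing part $g_1$ contributes $-(g_1(u)-g_1(0))\le 0$ on $\{u\ge 0\}$, so it only helps. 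The genuinely delicate contribution is $g_2$; here one uses that $u\ge 0$ in $\Omega$ together with $u\in L^\infty$ to bound $|g_2(u)-g_2(0)|\le L\,u$ for a suitable constant $L$ (Lipschitz constant of $g_2$ on the range of $u$), obtaining
\[
\fpl u \ge -L\,u \qquad \text{weakly in $\Omega$.}
\]

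Next I would set $v=(-\Delta_p^s$-type) auxiliary comparison and apply the strong maximum principle. From $\fpl u\ge -Lu$ with $u\ge 0$, $u\not\equiv 0$, $u\in\w\cap C^0(\overline\Omega)$, the result of \cite[Theorem 1.2]{DQ} gives $u>0$ in $\Omega$, and \cite[Theorem 1.5]{DQ} (the Hopf lemma) gives the boundary estimate from below by a multiple of $\ds$ near $\partial\Omega$, i.e. $\liminf_{x\to\partial\Omega} u(x)/\ds(x)>0$. Combining the boundary estimate with $u>0$ in the interior and continuity of $u$ on the compact set $\overline\Omega$ yields $\inf_\Omega u/\ds>0$: on a neighborhood of $\partial\Omega$ the ratio is bounded below by the Hopf constant, while on the complementary compact subset of $\Omega$ both $u$ and $\ds$ are continuous and strictly positive, hence the ratio attains a positive minimum there. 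The final clause — if moreover $u\in\cs$ then $u\in{\rm int}(\cs_+)$ — is then immediate from the explicit description ${\rm int}(\cs_+)=\{u\in\cs:\inf_\Omega u/\ds>0\}$ recalled after \cite[Lemma 5.1]{ILPS}.

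The main obstacle I anticipate is making the reduction step fully rigorous at the level of weak formulations: \cite{DQ} presumably works with a pointwise or viscosity notion, or with a pure-operator weak inequality $\fpl u\ge h$ for $h\in L^\infty$, so one must check that the weak inequality $\fpl u\ge g(0)-g(u)$ in the sense defined in this paper (testing with $v\in\w_+$, $u\in\widetilde W^{s,p}(\Omega)$) is compatible with the hypotheses under which \cite[Theorems 1.2, 1.5]{DQ} are stated, and that $h:=g(0)-g(u)\in L^\infty(\Omega)$ with the correct sign structure after the $g=g_1-g_2$ splitting. A secondary technical point is the $BV_{\rm loc}$ hypothesis on $g$: it is exactly what guarantees the decomposition into a difference of monotone functions on compacts (Jordan decomposition), so the Lipschitz bound on $g_2$ restricted to the range of $u$ is legitimate; one should be careful that $g$ need not be monotone globally, only that its total variation is locally finite, which still suffices on the bounded range of the bounded function $u$. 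Once these formulation issues are settled, the rest is the routine patching argument for $\inf_\Omega u/\ds>0$ described above.
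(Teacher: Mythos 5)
There is a genuine gap, and it sits exactly where you flag your ``main obstacle'': the reduction to a zero-order inequality that \cite[Theorems 1.2, 1.5]{DQ} can handle does not go through. Two problems. First, the Jordan decomposition is used with the wrong sign: writing $g=g_1-g_2$ with $g_1,g_2$ nondecreasing, from $u\ge 0$ one gets $g_2(u)-g_2(0)\ge 0$, so this is the term that can be dropped from the lower bound $\fpl u\ge [g_1(0)-g_1(u)]+[g_2(u)-g_2(0)]$; what survives as the dangerous absorption term is $g_1(0)-g_1(u)\le 0$. The inequality one can salvage is $\fpl u+g_1(u)\ge g_1(0)$, i.e.\ one may assume $g$ \emph{nondecreasing} (this is what the paper does) --- it is not true that the nondecreasing part ``only helps''. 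Second, and more seriously, the bound $|g(u)-g(0)|\le L\,u$ requires $g$ to be Lipschitz near $0$, which is not assumed: $g\in C^0(\R)\cap BV_{\rm loc}(\R)$ allows e.g.\ $g(t)=(t^+)^\sigma$ with $0<\sigma<1$, and mollifying $g$ does not preserve the differential inequality. Even granting a Lipschitz bound, the resulting inequality $\fpl u\ge -Lu$ is \emph{not} of the form $\fpl u\ge -c\,u^{p-1}$ required by \cite{DQ}: since $u\to 0$ at $\partial\Omega$, for $p>2$ one has $u^{p-1}\le u$ where $u\le 1$, hence $Lu\le c\,u^{p-1}$ fails for every constant $c$ as $u\to 0^+$. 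This degenerate regime is precisely what the theorem is designed to cover (see Remark \ref{loc}: reactions $g(t)=t^\sigma$ with $\sigma<p-1$, for which even the local strong maximum principle fails because of dead cores), so no reduction to a $(p-1)$-homogeneous zero-order term can work.

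The paper's proof avoids any such reduction. After assuming without loss of generality that $g$ is nondecreasing with $g(0)=0$, it builds a lower barrier from the torsion function $v$ (solving $\fpl v=1$, so that $v\ge c\,\ds$), glues $w_\eta=\eta v$ outside a small ball with $w_\eta=u$ inside it, and uses the nonlocal superposition principle of \cite{IMS2} to get $\fpl w_\eta\le \eta^{p-1}-C_\rho\eps^{p-1}$ outside the ball. The key point is that the strictly negative contribution $-C_\rho\eps^{p-1}$ comes from the nonlocal tail (where $u$ exceeds $\eta v$ by $\eps$) and is independent of $\eta$, so for $\eta$ small it absorbs both $\eta^{p-1}$ and $g(w_\eta)=o(1)$; only continuity of $g$ at $0$ and its monotonicity are used, via the strict $T$-monotonicity of Proposition \ref{stm}, to conclude $u\ge\eta v\ge\eta c\,\ds$. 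If you wish to keep your strategy, you would at minimum have to restrict to $p=2$ and $g$ Lipschitz, which yields a strictly weaker statement than the one claimed.
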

\begin{proof}
By Jordan's decomposition, we can find $g_1,g_2\in C^0(\R)$ nondecreasing s.t.\ $g(t)=g_1(t)-g_2(t)$ for all $t\in\R$, and $g_1(0)=0$. So we have weakly in $\Omega$
\begin{align*}
\fpl u +g_1(u) &= \fpl u+g(u)+g_2(u) \\
&\ge g(0)+g_2(0) = 0.
\end{align*}
Thus, without loss of generality we may assume that $g$ is nondecreasing and $g(0)=0$. In order to prove our assertion, we need a lower barrier for $u$. Let us consider the following torsion problem:
\beq\label{smp1}
\begin{cases}
\fpl v = 1 & \text{in $\Omega$} \\
v = 0 & \text{in $\Omega^c$.}
\end{cases}
\eeq
By convexity, \eqref{smp1} has a unique solution $v\in\w$, which by \cite[Lemma 2.3]{IMS2} satisfies $v\ge c\ds$ in $\Omega$, for some $c>0$. By Proposition \ref{reg} we have $v\in C^\alpha_s(\Omega)$, in particular $v$ is continuous. So, since $u\not\equiv 0$, we can find $x_0\in\Omega$, $\rho,\eps>0$, and $\eta_0\in(0,1)$ s.t.\ $\overline{B}_\rho(x_0)\subset\Omega$ and
\beq\label{smp2}
\sup_{\overline{B}_\rho(x_0)}\eta_0 v < \inf_{\overline{B}_\rho(x_0)}u-\eps.
\eeq
Set for all $x\in\R^N$, $\eta\in(0,\eta_0]$
\[w_\eta(x) = \begin{cases}
\eta v(x) & \text{if $x\in\overline{B}_{\rho/2}^c(x_0)$} \\
u(x) & \text{if $x\in \overline{B}_{\rho/2}(x_0)$.}
\end{cases}\]
First, by \eqref{smp2} we have $w_\eta\le u$ in $\overline{B}_\rho(x_0)$. Besides, $w_\eta\in\widetilde{W}^{s,p}(\Omega\setminus\overline{B}_\rho(x_0))$ and by the nonlocal superposition principle \cite[Proposition 2.6]{IMS2} we have weakly in   $\Omega\setminus\overline{B}_\rho(x_0)$ 
\begin{align*}
\fpl w_\eta(x) &= \fpl(\eta v)(x)+2\int_{\overline{B}_{\rho/2}(x_0)}\frac{(\eta v(x)-u(y))^{p-1}-(\eta v(x)-\eta v(y))^{p-1}}{|x-y|^{N+ps}}\,dy \\
&\le \eta^{p-1}\fpl v(x)+C\int_{\overline{B}_{\rho/2}(x_0)}\frac{(\eta v(y)-u(y))^{p-1}}{|x-y|^{N+ps}}\,dy \\
&\le \eta^{p-1}+C\int_{\overline{B}_{\rho/2}(x_0)}\frac{-\eps^{p-1}}{(\rho/2)^{N+ps}}\,dy \\
&\le \eta^{p-1}-C_\rho\eps^{p-1}
\end{align*}
for some $C_\rho>0$ independent of $\eps$, where we have used \eqref{smp1}, \eqref{smp2}, and the following elementary inequality:
\[(a-b)^{p-1}-(a-c)^{p-1} \le 2^{2-p}(c-b)^{p-1},\]
holding for all $a,b,c\in\R$, $b\ge c$ (see \cite[eq. (2.7)]{IMS1}). So, for all $\eta\in(0,\eta_0]$ small enough we have weakly in $\Omega\setminus\overline{B}_\rho(x_0)$
\[\fpl w_\eta < -C\eps^{p-1}.\]
Since $g(w_\eta)\to 0$ uniformly in $\Omega\setminus\overline{B}_\rho(x_0)$ as $\eta\to 0^+$, for an even smaller $\eta\in(0,\eta_0]$ we have (in a weak sense)
\[\begin{cases}
\fpl w_\eta+g(w_\eta) \le 0 \le \fpl u+g(u) & \text{in $\Omega\setminus\overline{B}_\rho(x_0)$} \\
w_\eta \le u & \text{in $(\Omega\setminus\overline{B}_\rho(x_0))^c$.}
\end{cases}\]
Since $w_\eta\le u$ in $\overline{B}_\rho(x_0)$, we have $(w_\eta-u)^+\in W^{s,p}_{0}(\Omega\setminus \overline{B}_\rho(x_0))$. So we can employ such function to test the inequality above and get
\[\langle\fpl w_\eta-\fpl u,(w_\eta-u)^+\rangle \le \int_{\Omega\setminus\overline{B}_\rho(x_0)}(g(u)-g(w_\eta))(w_\eta-u)^+\,dx,\]
and the latter is negative by monotonicity of $g$. By Proposition \ref{stm} we have $w_\eta\le u$ in $\Omega\setminus\overline{B}_\rho(x_0)$. Combining with \eqref{smp2} we get in $\Omega$
\[u \ge \eta v \ge \eta c\ds,\]
hence the conclusion. In particular, if $u\in\cs$, then clearly we have $u\in{\rm int}(\cs_+)$.
\end{proof}

\noindent
With a similar technique we prove a strong comparison principle for general autonomous reactions:

\begin{theorem}\label{scp}
Let $g\in C^0(\R)\cap BV_{\rm loc}(\R)$, $u,v\in\w\cap C^0(\overline\Omega)$ s.t.\ $u\not\equiv v$, $K>0$ satisfy
\[\begin{cases}
\fpl v+g(v) \le \fpl u+g(u) \le K & \text{weakly in $\Omega$} \\
0 < v \le u & \text{in $\Omega$.}
\end{cases}\]
Then, $u>v$ in $\Omega$. In particular, if $u,v\in{\rm int}(\cs_+)$, then $u-v\in{\rm int}(\cs_+)$.
\end{theorem}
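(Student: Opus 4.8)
The idea is to mimic the argument of Theorem \ref{smp}, but now using $v$ (suitably shrunk near its maximum with $u$) as the lower barrier instead of the torsion function. Since $u\not\equiv v$ and both are continuous on $\overline\Omega$ with $v\le u$ in $\Omega$, there is a point $x_0\in\Omega$ and radii $\rho,\eps>0$ such that $\overline B_\rho(x_0)\subset\Omega$ and $\sup_{\overline B_\rho(x_0)}v < \inf_{\overline B_\rho(x_0)}u-\eps$; indeed, if $u=v$ on an open set we reduce the problem to showing strict positivity of $u-v$ across $\partial$ of that set, and otherwise such a gap exists by continuity. For $\eta\in(0,1)$ close to $1$ I would define the glued competitor
\[w_\eta(x) = \begin{cases}
\eta\,v(x) & \text{if }x\in\overline B_{\rho/2}^c(x_0)\\
u(x) & \text{if }x\in\overline B_{\rho/2}(x_0),
\end{cases}\]
which again lies in $\widetilde W^{s,p}(\Omega\setminus\overline B_\rho(x_0))$ and satisfies $w_\eta\le u$ in $\overline B_\rho(x_0)$ by the choice of the gap, provided $\eta$ is close enough to $1$.

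The next step is to estimate $\fpl w_\eta$ on $\Omega\setminus\overline B_\rho(x_0)$ via the nonlocal superposition principle \cite[Proposition 2.6]{IMS2}, exactly as in Theorem \ref{smp}:
\[\fpl w_\eta(x) = \fpl(\eta v)(x) + 2\int_{\overline B_{\rho/2}(x_0)}\frac{(\eta v(x)-u(y))^{p-1}-(\eta v(x)-\eta v(y))^{p-1}}{|x-y|^{N+ps}}\,dy.\]
For the first term I would use that $v$ is a subsolution of $\fpl v = K-g(v)+g(v) \le K$ in a weak sense — more precisely $\fpl v \le K - g(v)$ — together with the $1$-homogeneity estimate $\fpl(\eta v)\le \eta^{p-1}\fpl v$ valid for $\eta\le 1$ in the distributional order sense (this needs the inequality $(\eta a-\eta b)^{p-1}=\eta^{p-1}(a-b)^{p-1}$). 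For the correction integral, since $\eta v(y)-u(y)\le -\eps$ on $\overline B_{\rho/2}(x_0)$ and $|x-y|\ge\rho/2$ there, the elementary inequality $(a-b)^{p-1}-(a-c)^{p-1}\le 2^{2-p}(c-b)^{p-1}$ (with $b=u(y)\ge c=\eta v(y)$) bounds it above by $-C_\rho\eps^{p-1}$ for a constant $C_\rho>0$ independent of $\eta$ and $\eps$. Thus weakly in $\Omega\setminus\overline B_\rho(x_0)$,
\[\fpl w_\eta \le \eta^{p-1}\big(K-g(v)\big) - C_\rho\eps^{p-1}.\]

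Now the key point: as $\eta\to 1^-$ we have $g(w_\eta)=g(\eta v)\to g(v)$ uniformly on $\overline{\Omega\setminus\overline B_\rho(x_0)}$ (using uniform continuity of $g$ on compacts and uniform boundedness of $v$), and $\eta^{p-1}\to 1$, so $\fpl w_\eta + g(w_\eta) \le \eta^{p-1}K - C_\rho\eps^{p-1} + o(1) \le K - \tfrac{1}{2}C_\rho\eps^{p-1} < K$ for $\eta$ close enough to $1$. Hence weakly in $\Omega\setminus\overline B_\rho(x_0)$ we obtain
\[\fpl w_\eta + g(w_\eta) \le K \le \fpl u + g(u), \qquad w_\eta\le u \ \text{ in } (\Omega\setminus\overline B_\rho(x_0))^c,\]
the second inequality being the hypothesis on $u$ and the boundary relation following from $w_\eta=u$ on $\overline B_{\rho/2}(x_0)$ and $w_\eta=\eta v\le v\le u$ outside $\Omega$ (recalling $v\ge 0$) and on the annulus. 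Since $w_\eta\le u$ on $\overline B_\rho(x_0)$ we get $(w_\eta-u)^+\in W^{s,p}_0(\Omega\setminus\overline B_\rho(x_0))$, which we test against the difference of the two inequalities:
\[\langle\fpl w_\eta-\fpl u,(w_\eta-u)^+\rangle \le \int_{\Omega\setminus\overline B_\rho(x_0)}\big(g(u)-g(w_\eta)\big)(w_\eta-u)^+\,dx \le 0\]
by monotonicity of $g$ (after the Jordan-decomposition reduction to $g$ nondecreasing, performed as in Theorem \ref{smp}). Proposition \ref{stm} then yields $w_\eta\le u$ in $\Omega\setminus\overline B_\rho(x_0)$, i.e. $\eta v\le u$ there; combined with $v < u$ on $\overline B_\rho(x_0)$ we conclude $u > \eta v$, hence $u \ge \eta v$ with strict inequality wherever $v>0$, giving $u>v$ in $\Omega$ after a final observation: to upgrade $u\ge\eta v$ to $u>v$ one runs the argument for every $\eta<1$, so $u\ge v$, and then strictness comes from applying Theorem \ref{smp} to the function $u-v$, which weakly satisfies $\fpl(u-v)\ge$ something controlled once we note $u-v\not\equiv 0$ and $u-v\ge 0$. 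In the case $u,v\in\mathrm{int}(\cs_+)$, the inequality $\eta v\le u$ for all $\eta<1$ together with $v,u/\ds$ bounded and bounded away from zero already forces $\inf_\Omega (u-v)/\ds>0$? — not immediately, so instead we conclude $u-v\in\mathrm{int}(\cs_+)$ by invoking Theorem \ref{smp} on $w:=u-v$: it is nonnegative, continuous, nonzero, and one checks it weakly satisfies $\fpl w + \tilde g(w)\ge \tilde g(0)$ for a suitable $\tilde g\in C^0\cap BV_{\rm loc}$ built from $g$ and the ordered-operator inequality $\fpl u-\fpl v \ge -(g(u)-g(v))$, using the monotonicity/BV structure to absorb the nonlocal coupling.

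The main obstacle I anticipate is exactly the last step — passing from the family of inequalities $\{\,\eta v\le u : \eta<1\,\}$ (equivalently $u\ge v$) to the \emph{strict} interior statement $u-v\in\mathrm{int}(\cs_+)$. The subsolution inequality for $w=u-v$ is not simply $\fpl w\ge 0$ because $\fpl$ is nonlinear and does not split additively; one must instead use a nonlocal monotonicity/superposition estimate to show $w$ satisfies the hypotheses of Theorem \ref{smp} with an appropriate zeroth-order term. Handling this coupling term — and checking that the resulting zeroth-order nonlinearity inherits the $C^0\cap BV_{\rm loc}$ regularity required by Theorem \ref{smp} — is where the real work lies; everything else is a transcription of the barrier construction already carried out in the proof of Theorem \ref{smp}.
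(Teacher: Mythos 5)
Your barrier construction is the right template, but two linked choices make the argument collapse exactly at the point you yourself flag. First, the comparison chain you write, $\fpl w_\eta+g(w_\eta)\le K\le\fpl u+g(u)$, reverses the hypothesis: $K$ is an \emph{upper} bound for $\fpl u+g(u)$, so the second inequality is false and you cannot route the comparison with $u$ through the constant $K$. The constant enters the proof differently: one compares $\fpl w_\eta+g(w_\eta)$ with $\eta^{p-1}(\fpl v+g(v))\le\eta^{p-1}(\fpl u+g(u))$ and then writes $\eta^{p-1}(\fpl u+g(u))\le\fpl u+g(u)+(\eta^{p-1}-1)K$, an inequality whose sign requires $\eta>1$; the resulting error $K(\eta^{p-1}-1)+(g(w_\eta)-\eta^{p-1}g(v))$ tends to $0$ uniformly as $\eta\to1^+$ and is absorbed by the nonlocal gain $-C_\rho\eps^{p-1}$.

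Second, and decisively, the paper \emph{dilates} $v$ by $\eta>1$ rather than shrinking it by $\eta<1$. The gap condition $\sup_{\overline{B}_\rho(x_0)}\eta v<\inf_{\overline{B}_\rho(x_0)}u-\eps/2$ survives for $\eta>1$ close to $1$, and the same gluing, testing with $(w_\eta-u)^+$, and Proposition \ref{stm} then yield $u\ge\eta v$ in $\Omega$ for one \emph{fixed} $\eta>1$. This gives $u\ge\eta v>v$ in $\Omega$ (since $v>0$), and when $u,v\in{\rm int}(\cs_+)$ it gives $u-v\ge(\eta-1)v$, hence $\inf_\Omega(u-v)/\ds\ge(\eta-1)\inf_\Omega v/\ds>0$ --- both conclusions for free. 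With $\eta<1$ you only recover $u\ge\eta v$ for all $\eta<1$, i.e.\ $u\ge v$, which is a hypothesis; and your proposed rescue, applying Theorem \ref{smp} to $w=u-v$, cannot be carried out because $\fpl$ is nonlinear and nonlocal, so $u-v$ satisfies no differential inequality of the form Theorem \ref{smp} requires. You correctly identify this obstruction as ``where the real work lies,'' but the work is not done there: it is done by reversing the direction of $\eta$. Replacing $\eta\to1^-$ by $\eta\to1^+$ and using $K$ on the correct side turns your sketch into the paper's proof.
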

\begin{proof}
As in Theorem \ref{smp}, we may assume $g$ nondecreasing. By continuity, we can find $x_0\in\Omega$, $\rho,\eps>0$ s.t.\ $\overline{B}_\rho(x_0)\subset\Omega$ and
\[\sup_{\overline{B}_\rho(x_0)}v < \inf_{\overline{B}_\rho(x_0)}u-\eps.\]
Hence, for all $\eta>1$ close enough to $1$ we have
\beq\label{scp1}
\sup_{\overline{B}_\rho(x_0)}\eta v < \inf_{\overline{B}_\rho(x_0)}u-\frac{\eps}{2}.
\eeq
Define $w_\eta\in\widetilde{W}^{s,p}(\Omega\setminus\overline{B}_\rho(x_0))$ as in Theorem \ref{smp}, so by \eqref{scp1} we have $w_\eta\le u$ in $\overline{B}_\rho(x_0)$. Applying nonlocal superposition as in the previous proof we have weakly in $\Omega\setminus\overline{B}_\rho(x_0)$
\[\fpl w_\eta \le \eta^{p-1}\fpl v-C_\rho\eps^{p-1},\]
for some $C_\rho>0$ and all $\eta>1$ close enough to $1$. Further, we have weakly in $\Omega\setminus\overline{B}_\rho(x_0)$
\begin{align*}
\fpl w_\eta+g(w_\eta) &\le \eta^{p-1}\fpl v+g(w_\eta)-C_\rho\eps^{p-1} \\
&\le \eta^{p-1}\big(\fpl v+g(v)\big)+\big(g(w_\eta)-\eta^{p-1}g(v)\big)-C_\rho\eps^{p-1} \\
&\le \eta^{p-1}\big(\fpl u+g(u)\big)+\big(g(w_\eta)-\eta^{p-1}g(v)\big)-C_\rho\eps^{p-1} \\
&\le \fpl u+g(u)+K(\eta^{p-1}-1)+\big(g(w_\eta)-\eta^{p-1}g(v)\big)-C_\rho\eps^{p-1}.
\end{align*}
Since
\[K(\eta^{p-1}-1)+\big(g(w_\eta)-\eta^{p-1}g(v)\big) \to 0\]
uniformly in $\Omega\setminus\overline{B}_\rho(x_0)$ as $\eta\to 1^+$, we have for all $\eta>1$ close enough to $1$
\[\begin{cases}
\fpl w_\eta+g(w_\eta) \le \fpl u+g(u) & \text{weakly in $\Omega\setminus\overline{B}_\rho(x_0)$} \\
w_\eta \le u & \text{in $(\Omega\setminus\overline{B}_\rho(x_0))^c$.}
\end{cases}\]
Testing with $(w_\eta-u)^+\in W^{s, p}_0(\Omega\setminus\overline{B}_\rho(x_0))$, recalling the monotonicity of $g$, and applying Proposition \ref{stm} we get $u\ge w_\eta$ in $\Omega\setminus\overline{B}_\rho(x_0)$. So we have in $\Omega$
\[u \ge \eta v >v,\]
hence the conclusion. In particular, if $u,v\in{\rm int}(\cs_+)$, then clearly
\[\inf_\Omega\frac{u-v}{\ds} \ge \inf_\Omega\frac{(\eta-1)v}{\ds} > 0,\]
so $u-v\in{\rm int}(\cs_+)$.
\end{proof}

\begin{remark}\label{loc}
Both results above exhibit unexpected differences when compared to the corresponding local versions, i.e., the case of the classical $p$-Laplacian. For example, according to Theorem \ref{smp}, the strong maximum principle holds for supersolutions of the Dirichlet problem
\[\begin{cases}
\fpl u+u^\sigma = 0 & \text{in $\Omega$} \\
u = 0 & \text{in $\Omega^c$}
\end{cases}\]
for any $\sigma>0$, while for $s=1$ the same is not true when $\sigma<p-1$ due to the possible presence of dead cores (see \cite[p.\ 204]{PS1}). Also, the strong comparison principle of Theorem \ref{scp} includes cases which are excluded in the local case (see \cite[Example 4.1]{CT}). This is essentially due to the nonlocal nature of the operator.
\end{remark}

 \section{The subdiffusive case}\label{sec3}

\noindent
In this section we study problem $(P_\lambda)$ with $1<q<p<r<\p$. For all $\lambda>0$, $t\in\R$ we set
\[f_\lambda(t) = \lambda(t^+)^{q-1}-(t^+)^{r-1},\]
\[F_\lambda(t) = \int_0^tf_\lambda(\tau)\,d\tau = \lambda\frac{(t^+)^q}{q}-\frac{(t^+)^r}{r}.\]
Note that $f_\lambda:\R\to\R$ satisfies hypotheses ${\bf H}$ as stated in Section \ref{sec2}. So we may set for all $u\in\w$
\[\Phi_\lambda(u) = \frac{\|u\|^p}{p}-\int_\Omega F_\lambda(u)\,dx,\]
and deduce that $\Phi_\lambda\in C^1(\w)$ and the positive critical points of $\Phi_\lambda$ coincide with the solutions of $(P_\lambda)$.
\vskip2pt
\noindent
In this case we have the following global existence and uniqueness result (corresponding to case $(a)$ of Theorem \ref{main}):

\begin{theorem}\label{sub}
Let $1<q<p<r<\p$. Then, for all $\lambda>0$ problem $(P_\lambda)$ has a unique solution $u_\lambda\in{\rm int}(\cs_+)$, s.t.\ $u_\lambda-u_\mu\in{\rm int}(\cs_+)$ for all $\lambda>\mu>0$ and $u_\lambda\to 0$ in both $\w$ and $\cs$ as $\lambda\to 0^+$.
\end{theorem}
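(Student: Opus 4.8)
The plan is to treat existence via direct minimization and uniqueness via the Brezis--Oswald type weak comparison principle (Proposition \ref{wcp}), then obtain monotonicity from the strong comparison principle (Theorem \ref{scp}). First, I would show that $\Phi_\lambda$ is coercive and weakly lower semicontinuous on $\w$. Coercivity follows since $q<p$ and $r>p$ imply $F_\lambda(t)\le \lambda (t^+)^q/q \le C(1+|t|^q)$ with $q<p$, so $\Phi_\lambda(u)\ge \|u\|^p/p - C(1+\|u\|_q^q) \to +\infty$ by the Sobolev embedding $\w\hookrightarrow L^q(\Omega)$ and Young's inequality. Weak lower semicontinuity comes from weak lower semicontinuity of $\|\cdot\|^p$ and compactness of $\w\hookrightarrow L^q(\Omega)$, $\w\hookrightarrow L^r(\Omega)$. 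Hence $\Phi_\lambda$ attains a global minimum at some $u_\lambda\in\w$; testing with a small multiple of $\hat u_1$ (using that for small $t>0$, $F_\lambda(t\hat u_1)=\lambda t^q\hat u_1^q/q - t^r\hat u_1^r/r$ dominates $t^p\|\hat u_1\|^p/p$ because $q<p$), one gets $\Phi_\lambda(u_\lambda)<0$, so $u_\lambda\not\equiv 0$. Since $F_\lambda$ only sees $t^+$, replacing $u_\lambda$ by $u_\lambda^+$ does not increase $\Phi_\lambda$, and by \eqref{pnp} it strictly decreases unless $u_\lambda^-=0$; thus $u_\lambda\ge 0$, $u_\lambda\not\equiv 0$, and $u_\lambda$ solves $(P_\lambda)$ weakly. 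By Propositions \ref{apb} and \ref{reg}, $u_\lambda\in L^\infty(\Omega)\cap C^\alpha_s(\overline\Omega)$; applying Theorem \ref{smp} with $g(t)=(t^+)^r/r$-type antiderivative — more precisely with the reaction written as $\fpl u_\lambda = \lambda u_\lambda^{q-1}-u_\lambda^{r-1}\ge -u_\lambda^{r-1}=g(0)-g(u_\lambda)$ for a suitable nondecreasing locally-$BV$ $g$ — yields $u_\lambda\in{\rm int}(\cs_+)$.

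For uniqueness, I would check the Brezis--Oswald monotonicity hypothesis of Proposition \ref{wcp}: the map $t\mapsto f_\lambda(t)/t^{p-1}=\lambda t^{q-p}-t^{r-p}$ is strictly decreasing on $(0,\infty)$ since $q<p<r$. Given any two solutions $u,v\in{\rm int}(\cs_+)\cap\w$, both are simultaneously subsolutions and supersolutions, so Proposition \ref{wcp} gives $u\le v$ and $v\le u$, hence $u=v=u_\lambda$.

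For monotonicity in $\lambda$, fix $\lambda>\mu>0$. Then $u_\mu$ satisfies $\fpl u_\mu=\mu u_\mu^{q-1}-u_\mu^{r-1}\le \lambda u_\mu^{q-1}-u_\mu^{r-1}$, i.e. $u_\mu$ is a subsolution of $(P_\lambda)$, while $u_\lambda$ is the solution; since $t\mapsto f_\lambda(t)/t^{p-1}$ is decreasing, Proposition \ref{wcp} gives $u_\mu\le u_\lambda$. To upgrade this to $u_\lambda-u_\mu\in{\rm int}(\cs_+)$, I would apply Theorem \ref{scp}: writing the common reaction structure with $g$ the antiderivative of $t\mapsto (t^+)^{r-1}$ (continuous and locally $BV$), and noting $\fpl u_\mu + g(u_\mu)=\mu u_\mu^{q-1} < \lambda u_\mu^{q-1}$ is bounded while $\fpl u_\lambda+g(u_\lambda)=\lambda u_\lambda^{q-1}\le K$ on $\Omega$ — here one must absorb the $q-1<p-1$ power term; since both functions are in ${\rm int}(\cs_+)$ and bounded, $\lambda u_\lambda^{q-1}-\mu u_\mu^{q-1}$ is bounded, and $u_\mu\not\equiv u_\lambda$ (else $\lambda=\mu$) — the hypotheses of Theorem \ref{scp} are met, giving $u_\lambda-u_\mu\in{\rm int}(\cs_+)$. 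A small technical point here is that Theorem \ref{scp} as stated requires the same $g$ on both sides and $\fpl u_\lambda+g(u_\lambda)\le K$; one arranges this by choosing $g$ to make $f_\lambda(t)+g'(t)$ nonnegative-ish, i.e. $g(t)=\int_0^t(\tau^+)^{r-1}d\tau$, so $\fpl u+g(u)=\lambda u^{q-1}$ on $\Omega$ for the $\lambda$-solution, which is indeed bounded above on $\Omega$ by $L^\infty$ bounds.

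Finally, for the limit $u_\lambda\to 0$ as $\lambda\to 0^+$: from $\Phi_\lambda(u_\lambda)\le\Phi_\lambda(0)=0$ one gets $\|u_\lambda\|^p\le p\int_\Omega F_\lambda(u_\lambda)\le \lambda\frac{p}{q}\|u_\lambda\|_q^q\le C\lambda\|u_\lambda\|^q$, so $\|u_\lambda\|^{p-q}\le C\lambda\to 0$, giving $u_\lambda\to 0$ in $\w$. Then by Proposition \ref{reg}, $\|u_\lambda\|_{\alpha,s}\le C(\|u_\lambda\|)$ stays bounded, so along any sequence $\lambda_n\to 0^+$, by compactness of $C^\alpha_s(\overline\Omega)\hookrightarrow\cs$ a subsequence of $u_{\lambda_n}$ converges in $\cs$, necessarily to $0$ since it already converges to $0$ in $\w\hookrightarrow L^p$; as the limit is independent of the subsequence, $u_\lambda\to 0$ in $\cs$. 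I expect the main obstacle to be the careful bookkeeping in applying Theorem \ref{scp} for the strict monotonicity: one must verify the boundedness condition $\fpl u+g(u)\le K$ and the $BV_{\rm loc}$ requirement on $g$, and make sure the $(t^+)^{q-1}$ term (with exponent below $p-1$) is handled on the correct side of the inequality rather than absorbed into $g$, which would destroy monotonicity of $g$.
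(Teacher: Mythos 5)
Your proposal is correct and follows the paper's strategy almost step for step: direct minimization for existence, negativity of the infimum via $\tau\hat u_1$, Proposition \ref{wcp} applied twice for uniqueness, the weak comparison for $u_\mu\le u_\lambda$ followed by Theorem \ref{scp} with $g(t)=(t^+)^{r-1}$ for the strict ordering, and regularity plus compactness of $C^\alpha_s(\overline\Omega)\hookrightarrow\cs$ for the convergence in $\cs$. The only genuinely different step is the limit $\lambda\to 0^+$: you derive $\|u_\lambda\|^{p-q}\le C\lambda$ directly from $\Phi_\lambda(u_\lambda)\le\Phi_\lambda(0)=0$ and the Sobolev embedding, which is cleaner and even quantitative, whereas the paper tests the equation along a sequence $\lambda_n\downarrow 0$, extracts a weak limit, upgrades it via the $(S)_+$-property, and identifies the limit as $0$ from the limiting equation $\fpl u_0=-u_0^{r-1}$; your route avoids the compactness machinery entirely for the $\w$-convergence. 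Two cosmetic points: your sign argument for the minimizer ($\Phi_\lambda(u_\lambda^+)\le\Phi_\lambda(u_\lambda)-\|u_\lambda^-\|^p/p$ via \eqref{pnp}) replaces the paper's testing of the Euler--Lagrange equation with $-u_\lambda^-$, and is equally valid; and in the monotonicity step you twice call $g$ ``the antiderivative of $t\mapsto(t^+)^{r-1}$'' when you mean $g(t)=(t^+)^{r-1}$ itself --- your subsequent identity $\fpl u+g(u)=\lambda u^{q-1}$ makes clear which one you actually use, so this is a slip of terminology rather than a gap.
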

\begin{proof}
Fix any $\lambda>0$. We will find the solution of $(P_\lambda)$ by direct minimization. First we prove that $\Phi_\lambda$ is coercive. Indeed, since $q<r$, the mapping $F_\lambda$ is clearly bounded from above, i.e., there exists $C>0$ s.t.\ $F_\lambda(t) \le C$ for all $t\in\R$. So, for all $u\in\w$ we have
\[\Phi_\lambda(u) \ge \frac{\|u\|^p}{p}-C|\Omega|,\]
and the latter tends to $\infty$ as $\|u\|\to\infty$. Besides, by the compact embeddings $\w\hookrightarrow L^q(\Omega),\,L^r(\Omega)$, it is easily seen that $\Phi_\lambda$ is sequentially weakly lower semicontinuous in $\w$. So, there exists $u_\lambda\in\w$ s.t.\
\beq\label{sub1}
\Phi_\lambda(u_\lambda) = \inf_{\w}\Phi_\lambda =: m_\lambda.
\eeq
Besides, let $\hat u_1\in{\rm int}(\cs_+)$ be defined as in Section \ref{sec2}, then for all $\tau>0$
\[\Phi_\lambda(\tau\hat u_1) = \tau^p\frac{\|\hat u_1\|^p}{p}-\lambda\tau^q\frac{\|\hat u_1\|_q^q}{q}+\tau^r\frac{\|\hat u_1\|_r^r}{r},\]
and the latter is negative for all $\tau>0$ small enough (recall that $q<p<r$). So, in \eqref{sub1} we have $m_\lambda<0$, implying $u_\lambda\not \equiv 0$. From \eqref{sub1} we deduce that $\Phi'_\lambda(u_\lambda)=0$ in $W^{-s,p'}(\Omega)$, i.e., we have weakly in $\Omega$
\beq\label{sub2}
\fpl u_\lambda = f_\lambda(u_\lambda).
\eeq
By Lemma \ref{reg} we have $u_\lambda\in C^\alpha_s(\overline\Omega)$. Besides, testing \eqref{sub2} with $-u_\lambda^-\in\w$ and applying \eqref{pnp}, we have
\[\|u_\lambda^-\|^p \le \langle\fpl u_\lambda,-u_\lambda^-\rangle = \int_\Omega f_\lambda(u_\lambda)(-u_\lambda^-)\,dx = 0.\]
so $u_\lambda\ge 0$. Now Theorem \ref{smp} implies $u_\lambda\in{\rm int}(\cs_+)$, so $u_\lambda$ solves $(P_\lambda)$.
\vskip2pt
\noindent
Next we prove uniqueness. Let $v_\lambda\in{\rm int}(\cs_+)$ be another solution of $(P_\lambda)$. We have for all $t>0$
\[\frac{f_\lambda(t)}{t^{p-1}} = \lambda t^{q-p}-t^{r-p},\]
and such mapping is decreasing in $(0,\infty)$. Applying Proposition \ref{wcp} twice, we have $u_\lambda=v_\lambda$.
\vskip2pt
\noindent
To see monotonicity, let $0<\mu<\lambda$, and $u_\mu,u_\lambda\in{\rm int}(\cs_+)$ be the solutions of $(P_\mu)$, $(P_\lambda)$, respectively. We have weakly in $\Omega$
\[\fpl u_\mu < \lambda u_\mu^{q-1}-u_\mu^{r-1},\]
so $u_\mu$ is a strict subsolution of $(P_\lambda)$. By Proposition \ref{wcp} again we have $u_\mu\le u_\lambda$ in $\Omega$. This in turn implies that weakly in $\Omega$
\[\fpl u_\mu+u_\mu^{r-1} = \mu u_\mu^{q-1} < \lambda u_\lambda^{q-1} = \fpl u_\lambda+u_\lambda^{r-1}.\]
Since $g(t)=t^{r-1}$ is continuous and with locally bounded variation, we can apply Theorem \ref{scp} and see that $u_\lambda-u_\mu\in{\rm int}(\cs_+)$.
\vskip2pt
\noindent
Finally, let $(\lambda_n)$ be a decreasing sequence in $(0,\infty)$ s.t.\ $\lambda_n\to 0^+$, and $u_n\in{\rm int}(\cs_+)$ be the solution of $(P_{\lambda_n})$ for all $n\in\N$, i.e., we have weakly in $\Omega$
\beq\label{sub3}
\fpl u_n = f_{\lambda_n}(u_n).
\eeq
Since $q<p$ and $(\lambda_n)$ is decreasing, we can find $C>0$ s.t.\ for all $n\in\N$, $t\in\R$
\[f_{\lambda_n}(t)t \le C.\]
Testing \eqref{sub3} with $u_n\in\w$, for all $n\in\N$ we have
\[\|u_n\|^p = \langle\fpl u_n,u_n\rangle = \int_\Omega f_{\lambda_n}(u_n)u_n\,dx \le C|\Omega|.\]
So, $(u_n)$ is a bounded sequence in $\w$. By reflexivity and the compact embeddings $\w\hookrightarrow L^q(\Omega),\,L^r(\Omega)$, we can pass to a subsequence s.t.\ $u_n\rightharpoonup u_0$ in $\w$ and $u_n\to u_0$ in both $L^q(\Omega)$ and $L^r(\Omega)$. Testing \eqref{sub3} with $(u_n-u_0)\in\w$ and using H\"older's inequality, we have for all $n\in\N$
\begin{align*}
\langle\fpl u_n,u_n-u_0\rangle &= \int_\Omega(\lambda_n u_n^{q-1}-u_n^{r-1})(u_n-u_0)\,dx \\
&\le \lambda_1\|u_n\|_q^{q-1}\|u_n-u_0\|_q+\|u_n\|_r^{r-1}\|u_n-u_0\|_r,
\end{align*}
and the latter tends to $0$ as $n\to \infty$. By the $(S)_+$-property of $\fpl$, we have $u_n\to u_0$ in $\w$. So we can pass to the limit in \eqref{sub3} as $n\to\infty$ and get weakly in $\Omega$
\[\fpl u_0 = -u_0^{r-1}.\]
Testing with $u_0\in\w$ we have
\[\|u_0\|^p+\|u_0\|_r^r = 0,\]
i.e., $u_0=0$. Plus, we note that, by \eqref{sub3} and Proposition \ref{reg}, $(u_n)$ is bounded in $C^\alpha_s(\overline\Omega)$, hence, passing to a further subsequence, $u_n\to 0$ in $\cs$. Recalling that $\lambda\mapsto u_\lambda$ is strictly increasing, we conclude that globally $u_\lambda\to 0$ in both $\w$ and $\cs$, as $\lambda\to 0^+$.
\end{proof}

\section{The equidiffusive case}\label{sec4}

\noindent
In this short section we assume $2\le q=p<r<\p$, a case that does not differ too much from the previous one, except that the threshold for the parameter $\lambda$ turns out to be the principal eigenvalue $\hat\lambda_1>0$ defined in \eqref{pev}.  We define $f_\lambda$, $F_\lambda$, and $\Phi_\lambda$ as in Section \ref{sec3}.
\vskip2pt
\noindent
Our existence and uniqueness result (corresponding to case $(b)$ of Theorem \ref{main}) is the following:

\begin{theorem}\label{equ}
Let $2\le q=p<r<\p$. Then, for all $\lambda\in(0,\hat\lambda_1]$ problem $(P_\lambda)$ has no solution, while for all $\lambda>\hat\lambda_1$ problem $(P_\lambda)$ has a unique solution $u_\lambda\in{\rm int}(\cs_+)$, s.t.\ $u_\lambda-u_\mu\in{\rm int}(\cs_+)$ for all $\lambda>\mu>\hat\lambda_1$ and $u_\lambda\to 0$ in both $\w$ and $\cs$ as $\lambda\to\hat\lambda_1^+$.
\end{theorem}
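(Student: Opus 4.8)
The plan is to mirror the structure of the proof of Theorem \ref{sub}, adapting the three components — nonexistence for small $\lambda$, existence/uniqueness for $\lambda>\hat\lambda_1$, monotonicity, and the limit as $\lambda\to\hat\lambda_1^+$ — to the equidiffusive setting. First I would dispose of the nonexistence claim: if $u\in{\rm int}(\cs_+)$ solved $(P_\lambda)$ with $\lambda\le\hat\lambda_1$, then testing $\fpl u = \lambda u^{p-1}-u^{r-1}$ against $u$ itself gives $\|u\|^p = \lambda\|u\|_p^p - \|u\|_r^r < \lambda\|u\|_p^p \le \hat\lambda_1\|u\|_p^p$ by \eqref{pev} (the strict inequality because $u\not\equiv0$ forces $\|u\|_r^r>0$), contradicting the variational characterization of $\hat\lambda_1$. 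Note this argument also forbids $\lambda=\hat\lambda_1$, which is why the threshold is closed on that side.

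For existence when $\lambda>\hat\lambda_1$, the functional $\Phi_\lambda(u) = \|u\|^p/p - \lambda\|u^+\|_p^p/p + \|u^+\|_r^r/r$ is still coercive and sequentially weakly lower semicontinuous by the same reasoning as in Theorem \ref{sub} — here coercivity uses $r>p=q$, so the term $\|u^+\|_r^r/r$ dominates and $F_\lambda$ is bounded above. A global minimizer $u_\lambda$ exists; to see $u_\lambda\not\equiv0$ I would test the decay of $\Phi_\lambda$ along the ray $\tau\hat u_1$: since $\|\hat u_1\|^p = \hat\lambda_1$ and $\|\hat u_1\|_p=1$,
\[\Phi_\lambda(\tau\hat u_1) = \tau^p\,\frac{\hat\lambda_1-\lambda}{p} + \tau^r\,\frac{\|\hat u_1\|_r^r}{r},\]
and since $\lambda>\hat\lambda_1$ and $r>p$ this is negative for $\tau>0$ small, so $m_\lambda<0$ and $u_\lambda\not\equiv0$. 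Then $\Phi_\lambda'(u_\lambda)=0$, testing against $-u_\lambda^-$ with \eqref{pnp} gives $u_\lambda\ge0$, Proposition \ref{reg} gives $u_\lambda\in C_s^\alpha(\overline\Omega)$, and Theorem \ref{smp} (with $g(t)=t^{r-1}$, continuous and locally $BV$, noting $\fpl u_\lambda + u_\lambda^{r-1} = \lambda u_\lambda^{p-1}\ge0 = g(0)$ weakly in $\Omega$ since $u_\lambda\ge0$) upgrades this to $u_\lambda\in{\rm int}(\cs_+)$. Uniqueness and monotonicity follow exactly as in Theorem \ref{sub}: the map $t\mapsto f_\lambda(t)/t^{p-1} = \lambda - t^{r-p}$ is decreasing on $(0,\infty)$, so Proposition \ref{wcp} gives uniqueness, and for $\hat\lambda_1<\mu<\lambda$ it gives $u_\mu\le u_\lambda$; then writing $\fpl u_\mu + u_\mu^{r-1} = \mu u_\mu^{p-1} < \lambda u_\lambda^{p-1} = \fpl u_\lambda + u_\lambda^{r-1}$ weakly in $\Omega$ (using $u_\mu\le u_\lambda$ and $\mu<\lambda$, with strict inequality since $u_\mu\not\equiv u_\lambda$), Theorem \ref{scp} with $g(t)=t^{r-1}$ and $K = \lambda\|u_\lambda\|_\infty^{p-1}$ yields $u_\lambda-u_\mu\in{\rm int}(\cs_+)$.

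The delicate part is the limit as $\lambda\to\hat\lambda_1^+$. Take a decreasing sequence $\lambda_n\to\hat\lambda_1^+$ and let $u_n = u_{\lambda_n}$. The a priori bound on $(u_n)$ in $\w$ is no longer free: testing $\fpl u_n = f_{\lambda_n}(u_n)$ against $u_n$ gives $\|u_n\|^p = \lambda_n\|u_n\|_p^p - \|u_n\|_r^r$, and the $\lambda_n\|u_n\|_p^p$ term is not obviously controlled by $\|u_n\|^p$ when $\lambda_n$ is close to $\hat\lambda_1$. I expect one must argue that $\|u_n\|_\infty$ (equivalently $\|u_n\|_{0,s}$ and $\|u_n\|$) stays bounded: since $\lambda\mapsto u_\lambda$ is increasing (just proved) and $\lambda_n$ is decreasing, $u_n\le u_{\lambda_1}$ pointwise for all $n$, which immediately gives a uniform $L^\infty$ bound $\|u_n\|_\infty\le\|u_{\lambda_1}\|_\infty$, hence via $f_{\lambda_n}(u_n)$ bounded in $L^\infty$ and the continuity of $(\fpl)^{-1}$ on bounded sets (or simply testing against $u_n$ again now that $\|u_n\|_p$ is controlled) a uniform bound on $\|u_n\|$. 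From there the argument of Theorem \ref{sub} applies verbatim: pass to a subsequence with $u_n\rightharpoonup u_0$ in $\w$ and $u_n\to u_0$ in $L^p(\Omega)$ and $L^r(\Omega)$; test $\fpl u_n = f_{\lambda_n}(u_n)$ against $u_n-u_0$ and use Hölder to conclude $\langle\fpl u_n, u_n-u_0\rangle\to0$, so the $(S)_+$-property gives $u_n\to u_0$ in $\w$; pass to the limit to get $\fpl u_0 = \hat\lambda_1 u_0^{p-1} - u_0^{r-1}$ weakly in $\Omega$ with $u_0\ge0$. It remains to show $u_0=0$: if not, testing against $u_0$ gives $\|u_0\|^p = \hat\lambda_1\|u_0\|_p^p - \|u_0\|_r^r < \hat\lambda_1\|u_0\|_p^p$, again contradicting \eqref{pev}. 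Finally, $(u_n)$ is bounded in $C_s^\alpha(\overline\Omega)$ by Proposition \ref{reg} and the uniform bound on $\|u_n\|$, so a further subsequence converges to $0$ in $\cs$; since $\lambda\mapsto u_\lambda$ is strictly increasing the whole family converges, giving $u_\lambda\to0$ in both $\w$ and $\cs$ as $\lambda\to\hat\lambda_1^+$.
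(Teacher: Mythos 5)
Your proposal is correct and follows essentially the same route as the paper: testing against $u$ together with \eqref{pev} for nonexistence on $(0,\hat\lambda_1]$, minimization of $\Phi_\lambda$ and the ray $\tau\hat u_1$ for existence when $\lambda>\hat\lambda_1$, Proposition \ref{wcp} and Theorem \ref{scp} for uniqueness and monotonicity, and the $(S)_+$-limit argument identifying $u_0=0$ via the nonexistence at $\hat\lambda_1$. The only divergence is your perceived difficulty with the a priori bound as $\lambda\to\hat\lambda_1^+$, which is in fact a non-issue: the estimate $f_{\lambda_n}(t)t=\lambda_n(t^+)^q-(t^+)^r\le C$ used in Theorem \ref{sub} requires only $q<r$ (not $q<p$), so the paper reuses that step verbatim, while your detour through the pointwise bound $u_n\le u_{\lambda_1}$ is valid but unnecessary.
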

\begin{proof}
First, fix $\lambda\in(0,\hat\lambda_1]$. Assume that $u\in\w_+$ satisfies weakly in $\Omega$
\beq\label{equ1}
\fpl u = \lambda u^{p-1}-u^{r-1}.
\eeq
Testing \eqref{equ1} with $u\in\w$ and applying \eqref{pev}, we have
\[0 = \|u\|^p-\lambda\|u\|_p^p+\|u\|_r^r \ge (\hat\lambda_1-\lambda)\|u\|_p^p+\|u\|_r^r \ge \|u\|_r^r,\]
hence $u=0$. So $(P_\lambda)$ admits no solution.
\vskip2pt
\noindent
Now let $\lambda>\hat\lambda_1$. Arguing as in Theorem \ref{sub}, we see that $\Phi_\lambda$ has a global minimizer $u_\lambda\in\w_+$. Besides, for all $\tau>0$ we have
\begin{align*}
\Phi_\lambda(\tau\hat u_1) &= \tau^p\Big[\frac{\|\hat u_1\|^p}{p}-\lambda\frac{\|\hat u_1\|_p^p}{p}\Big]+\tau^r\frac{\|\hat u_1\|_r^r}{r} \\
&= \tau^p\frac{\hat\lambda_1-\lambda}{p}+\tau^r\frac{\|\hat u_1\|_r^r}{r},
\end{align*}
and the latter is negative for $\tau>0$ small enough (as $p<r$). So, $u_\lambda\not\equiv 0$. The rest of the proof follows exactly as in Theorem \ref{sub}.
\end{proof}

\section{The superdiffusive case}\label{sec5}

\noindent
In this final section we assume $2\le p<q<r<\p$ and define $f_\lambda$, $F_\lambda$, $\Phi_\lambda$ as in Section \ref{sec3}. Let
\[\Lambda = \big\{\lambda>0:\,\text{$(P_\lambda)$ has a solution $u_\lambda\in{\rm int}(\cs_+)$}\big\}.\]
In the following lemmas we shall investigate the structure of the set $\Lambda$ and additional properties of solutions. We begin with a lower bound:

\begin{lemma}\label{exi}
We have $\Lambda\neq\emptyset$ and $\lambda_*:=\inf\Lambda>0$.
\end{lemma}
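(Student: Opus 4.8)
The plan is to establish the two assertions separately. For $\Lambda\neq\emptyset$, the idea is to show that $(P_\lambda)$ is solvable for all large $\lambda$ by a direct minimization argument on $\Phi_\lambda$, restricted to a suitable ball. Since $p<q<r$, the functional $\Phi_\lambda$ is no longer coercive on all of $\w$, but the term $-u^{r-1}$ still dominates at infinity: writing $F_\lambda(t)=\lambda(t^+)^q/q-(t^+)^r/r$ and using Young's inequality to absorb the $\lambda(t^+)^q/q$ term, one gets $F_\lambda(t)\le C_\lambda$ for a constant $C_\lambda>0$, so $\Phi_\lambda(u)\ge \|u\|^p/p - C_\lambda|\Omega|$ is in fact coercive after all. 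Hence $\Phi_\lambda$ attains a global minimum at some $u_\lambda\in\w$, and testing the Euler--Lagrange equation with $-u_\lambda^-$ as in the proof of Theorem \ref{sub} (using \eqref{pnp}) gives $u_\lambda\ge 0$. The remaining point is to guarantee $u_\lambda\not\equiv 0$: evaluating $\Phi_\lambda$ along $\tau\hat u_1$ gives
\[
\Phi_\lambda(\tau\hat u_1) = \tau^p\frac{\|\hat u_1\|^p}{p} - \lambda\tau^q\frac{\|\hat u_1\|_q^q}{q} + \tau^r\frac{\|\hat u_1\|_r^r}{r},
\]
and since $p<q<r$, for fixed small $\tau>0$ the middle term dominates the first once $\lambda$ is large enough, so $\Phi_\lambda(\tau\hat u_1)<0=\Phi_\lambda(0)$; thus $m_\lambda<0$ and $u_\lambda\not\equiv 0$ for all $\lambda$ sufficiently large. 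Then Proposition \ref{reg} gives $u_\lambda\in C^\alpha_s(\overline\Omega)$ and Theorem \ref{smp} (with $g(t)=t^{r-1}$, which is continuous and of locally bounded variation) upgrades this to $u_\lambda\in{\rm int}(\cs_+)$, so such $\lambda$ belong to $\Lambda$, proving $\Lambda\neq\emptyset$.

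For $\lambda_*=\inf\Lambda>0$, the plan is to show that small $\lambda$ are excluded. Suppose $\lambda\in\Lambda$, so there is a solution $u\in{\rm int}(\cs_+)$ with $\fpl u = \lambda u^{q-1}-u^{r-1}$ weakly. Testing with $u$ itself and using the variational characterization \eqref{pev} of $\hat\lambda_1$ gives
\[
\hat\lambda_1\|u\|_p^p \le \|u\|^p = \lambda\|u\|_q^q - \|u\|_r^r \le \lambda\|u\|_q^q.
\]
Since $p<q<r<\p$, the embedding $\w\hookrightarrow L^q(\Omega)$ together with a priori bounds controls $\|u\|_q$; more directly, by the a priori bound on solutions one has $\|u\|_\infty \le M$ for some $M$, whence $\|u\|_q^q \le M^{q-p}\|u\|_p^p$, so the chain above forces $\hat\lambda_1 \le \lambda M^{q-p}$. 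The subtlety is that $M$ depends on $\|u\|$ (Propositions \ref{apb}, \ref{reg}), so this alone is not yet a $\lambda$-independent lower bound. To close the loop I would instead argue directly with the reaction: since $\lambda t^{q-1}-t^{r-1} \le \max_{t\ge 0}(\lambda t^{q-1}-t^{r-1}) =: c(\lambda)$, which is a finite constant depending on $\lambda$ and tending to $0$ as $\lambda\to 0^+$ (explicitly $c(\lambda)=c\,\lambda^{(r-1)/(r-q)}$ for a dimensional $c>0$), testing $\fpl u = \lambda u^{q-1}-u^{r-1}$ with $u$ and using the Poincaré-type inequality $\|u\|_1 \le C\|u\|$ together with $\hat\lambda_1\|u\|_p^p\le\|u\|^p$ gives, after combining with $\|u\|^p = \int_\Omega(\lambda u^{q-1}-u^{r-1})u\,dx \le c(\lambda)\|u\|_1 \le c(\lambda)C\|u\|$, the bound $\|u\|^{p-1}\le c(\lambda)C$; feeding this back, one finds $\hat\lambda_1\|u\|_p^p \le \|u\|^p \le (c(\lambda)C)^{p/(p-1)}$, and then using $\lambda\|u\|_q^q - \|u\|_r^r = \|u\|^p > 0$ forces $\lambda\|u\|_q^q > \|u\|_r^r \ge |\Omega|^{1-r/q}\|u\|_q^r$ by Hölder, i.e. $\|u\|_q^{r-q} < \lambda|\Omega|^{r/q-1}$, which combined with $\|u\|_p\le\|u\|_q|\Omega|^{1/p-1/q}$ and the lower bound $\|u\|^p\ge\hat\lambda_1\|u\|_p^p$ yields an inequality of the form $\hat\lambda_1 \le C'\lambda^{\beta}$ with $\beta>0$, hence $\lambda \ge (\hat\lambda_1/C')^{1/\beta} > 0$. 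This is uniform over $\lambda\in\Lambda$, so $\lambda_* = \inf\Lambda > 0$.

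The main obstacle is the second part: obtaining a genuinely $\lambda$-independent lower bound for $\Lambda$, since every regularity/a priori estimate at our disposal (Propositions \ref{apb}, \ref{reg}) has constants depending on the Sobolev norm of the solution, which itself depends on $\lambda$. The resolution is the self-improving bootstrap sketched above, exploiting that the reaction $f_\lambda$ is bounded above by a constant $c(\lambda)=O(\lambda^{(r-1)/(r-q)})$ vanishing as $\lambda\to 0^+$, so that testing against $u$ and invoking the bottom of the spectrum \eqref{pev} closes into a quantitative estimate $\lambda\gtrsim 1$. I would present this as a short chain of Hölder and Poincaré inequalities rather than invoking the heavier regularity machinery, keeping the constants transparent.
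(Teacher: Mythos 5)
Your first part ($\Lambda\neq\emptyset$) is correct and essentially identical to the paper's argument: coercivity of $\Phi_\lambda$ from the upper bound $F_\lambda\le C_\lambda$, direct minimization, nonnegativity via \eqref{pnp}, negativity of the infimum along multiples of $\hat u_1$ for large $\lambda$, and promotion to ${\rm int}(\cs_+)$ via Proposition \ref{reg} and Theorem \ref{smp}.

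The second part ($\lambda_*>0$) contains a genuine gap in its final step. You correctly derive upper bounds that vanish as $\lambda\to 0^+$, namely $\|u\|^{p-1}\le Cc(\lambda)$ and $\|u\|_q^{r-q}<\lambda|\Omega|^{r/q-1}$, but then claim that these, together with $\|u\|_p\le\|u\|_q|\Omega|^{1/p-1/q}$ and $\hat\lambda_1\|u\|_p^p\le\|u\|^p$, yield $\hat\lambda_1\le C'\lambda^\beta$. They do not: every inequality in that list is an \emph{upper} bound on a norm of $u$, and the Rayleigh quotient $\|u\|^p/\|u\|_p^p$ is invariant under scaling, so upper bounds on $\|u\|$ and $\|u\|_p$ separately cannot bound it from above. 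To compare $\|u\|_p^p$ against $\|u\|_q^q$ in the useful direction you would need a reverse H\"older inequality, i.e.\ a \emph{uniform} $L^\infty$ bound $\|u\|_q^q\le M^{q-p}\|u\|_p^p$ --- precisely the step you flagged as problematic and then abandoned rather than completed. The cleanest repair using your own ingredients is to produce a \emph{lower} bound on $\|u\|$ from the superlinearity of the reaction at zero: testing with $u$ gives $\|u\|^p\le\lambda\|u\|_q^q\le\lambda C_q^q\|u\|^q$ by the Sobolev embedding, hence $\|u\|^{q-p}\ge(\lambda C_q^q)^{-1}$, which blows up as $\lambda\to 0^+$ and contradicts your upper bound $\|u\|^{p-1}\le Cc(\lambda)\to 0$ for $\lambda$ small. (Alternatively, $\|u\|_\infty\le C(\|u\|)$ from Proposition \ref{apb} becomes uniform once $\|u\|$ is uniformly bounded, and then $\hat\lambda_1\le\lambda\|u\|_\infty^{q-p}$ closes the loop.) The paper avoids all of this norm juggling by constructing an explicit $\lambda_0>0$ with the \emph{pointwise} inequality $f_{\lambda_0}(t)\le\hat\lambda_1 t^{p-1}$ for all $t\ge0$ (splitting $t$ near $0$, near $\infty$, and in a compact interval), so that testing any putative solution of $(P_\lambda)$, $\lambda<\lambda_0$, with $u$ immediately contradicts the variational characterization \eqref{pev}; that route is shorter and keeps all constants explicit.
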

\begin{proof}
Fix $\lambda>0$. As in the proof of Theorem \ref{sub} we find $u_\lambda\in\w_+$ s.t.\
\beq\label{exi1}
\Phi_\lambda(u_\lambda) = \inf_{\w}\Phi_\lambda =: m_\lambda.
\eeq
Let $\hat u_1\in{\rm int}(\cs_+)$ be defined as in Section \ref{sec2}, then we have
\[\Phi_\lambda(\hat u_1) = \frac{\|\hat u_1\|^p}{p}-\lambda\frac{\|\hat u_1\|_q^q}{q}+\frac{\|\hat u_1\|_r^r}{r},\]
which tends to $-\infty$ as $\lambda\to\infty$. So, for all $\lambda>0$ big enough we have $m_\lambda<0$ in \eqref{exi1}, hence $u_\lambda\neq 0$. As in Theorem \ref{sub} we see that $u_\lambda\in{\rm int}(\cs_+)$ and it solves $(P_\lambda)$. Thus $\lambda\in\Lambda$.
\vskip2pt
\noindent
We claim that there exists $\lambda_0>0$ s.t.\ for all $t\ge 0$
\beq\label{exi2}
f_{\lambda_0}(t) \le \hat\lambda_1 t^{p-1},
\eeq
with $\hat\lambda_1>0$ defined by \eqref{pev}. Indeed, since $p<q<r$ we have for any $\lambda>0$
\[\lim_{t\to 0^+}\frac{f_\lambda(t)}{t^{p-1}} = 0, \ \lim_{t\to\infty}\frac{f_\lambda(t)}{t^{p-1}} = -\infty.\]
So we can find $\delta\in(0,1)$ s.t.\ for all $t\in(0,\delta)\cup(\delta^{-1},\infty)$ and all $\lambda\in(0,1]$
\[f_\lambda(t) \le \hat\lambda_1 t^{p-1}.\]
Now set
\[\lambda_0 = \min\{\hat\lambda_1\delta^{q-p},1\} > 0.\]
Then, for all $t\in[\delta,\delta^{-1}]$ we have
\[f_{\lambda_0}(t) < \lambda_0 t^{q-1} \le \hat\lambda_1 t^{p-1},\]
hence \eqref{exi2} holds for all $t\ge 0$. We prove that $\inf\Lambda\ge\lambda_0$, arguing by contradiction. Assume that for some $\lambda\in(0,\lambda_0)$ problem $(P_\lambda)$ has a solution $u_\lambda\in{\rm int}(\cs_+)$. Testing with $u_\lambda\in\w$ and using \eqref{exi2} we get
\[\|u_\lambda\|^p = \int_\Omega f_\lambda(u_\lambda)u_\lambda\,dx < \int_\Omega f_{\lambda_0}(u_\lambda)u_\lambda\,dx \le \hat\lambda_1\|u_\lambda\|_p^p,\]
against \eqref{pev}.
\end{proof}

\noindent
Next we prove that $\Lambda$ is a half-line and the mapping $\lambda\mapsto u_\lambda$ is strictly increasing:

\begin{lemma}\label{mon}
If $\lambda>\lambda_*$ then $\lambda\in\Lambda$, besides $u_\lambda-u_\mu\in{\rm int}(\cs_+)$ for all $\lambda>\mu>\lambda_*$.
\end{lemma}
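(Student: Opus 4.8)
The plan is to establish the two assertions of Lemma \ref{mon} by a sub--supersolution argument combined with direct minimization, followed by an application of the strong comparison principle of Theorem \ref{scp}. First I would fix $\lambda>\lambda_*$ and, by definition of $\lambda_*=\inf\Lambda$, pick some $\mu\in(\lambda_*,\lambda)$ with $\mu\in\Lambda$, so that there is a solution $u_\mu\in{\rm int}(\cs_+)$ of $(P_\mu)$. The key observation is that $u_\mu$ is a (strict) subsolution of $(P_\lambda)$: indeed, since $\mu<\lambda$ and $u_\mu>0$ in $\Omega$, we have weakly in $\Omega$
\[
\fpl u_\mu = \mu u_\mu^{q-1}-u_\mu^{r-1} < \lambda u_\mu^{q-1}-u_\mu^{r-1}=f_\lambda(u_\mu).
\]
To produce a supersolution, I would use the boundedness of $f_\lambda$: since $p<q<r$, for each fixed $\lambda>0$ there is a constant $M_\lambda>0$ with $f_\lambda(t)\le M_\lambda$ for all $t\ge 0$ (the reaction is negative for large $t$), and moreover $f_\lambda(t)\le M_\lambda$ for all $t$ in the relevant range. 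Let $\bar u$ be a large multiple of the torsion function $v$ from \eqref{smp1}, say $\bar u = M_\lambda^{1/(p-1)} v$, so that $\fpl\bar u = M_\lambda \ge f_\lambda(\bar u)$ in $\Omega$, i.e. $\bar u$ is a supersolution; enlarging the multiple if necessary, and using $u_\mu\le C\,\ds\le C'\,v$ (by $u_\mu\in{\rm int}(\cs_+)$ and $v\ge c\,\ds$), we may also arrange $u_\mu\le\bar u$ in $\Omega$.

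With the ordered pair $u_\mu\le\bar u$ in hand, I would minimize $\Phi_\lambda$ over the order interval $[u_\mu,\bar u]$, or equivalently introduce the truncated reaction $\hat f_\lambda(x,t)=f_\lambda(\min\{\max\{t,u_\mu(x)\},\bar u(x)\})$ and minimize the corresponding truncated functional $\hat\Phi_\lambda$ over all of $\w$. Coercivity follows as in Theorem \ref{sub} (the truncated primitive is bounded above), and sequential weak lower semicontinuity from the compact embeddings $\w\hookrightarrow L^q(\Omega),L^r(\Omega)$, so a global minimizer $u_\lambda$ exists. Standard truncation arguments — testing the Euler equation with $(u_\mu-u_\lambda)^+$ and with $(u_\lambda-\bar u)^+$ and using Proposition \ref{stm} together with the sub-/supersolution inequalities — show $u_\mu\le u_\lambda\le\bar u$, hence $u_\lambda$ solves $(P_\lambda)$ in the original (untruncated) sense, and since $u_\lambda\ge u_\mu>0$ with $u_\lambda$ continuous, Theorem \ref{smp} (or directly the lower bound $u_\lambda\ge u_\mu\ge c\,\ds$) gives $u_\lambda\in{\rm int}(\cs_+)$. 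Thus $\lambda\in\Lambda$, proving the first assertion.

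For the monotonicity statement, fix $\lambda>\mu>\lambda_*$ with solutions $u_\lambda,u_\mu\in{\rm int}(\cs_+)$; note that the construction above can be taken so that the solution attached to $\lambda$ dominates $u_\mu$, but in any case I would argue directly. Since $\mu<\lambda$ and $u_\lambda$ is obtained with $u_\lambda\ge u_\mu$ (or, failing that, by first comparing the two solutions via the construction: run the minimization for $(P_\lambda)$ using the subsolution $u_\mu$, so the resulting solution — which by uniqueness considerations need not a priori equal a given one, hence one should phrase the lemma's $u_\lambda$ as the solution so produced), we have $u_\mu\le u_\lambda$ in $\Omega$ and $u_\mu\not\equiv u_\lambda$ (they solve different equations at a point where both are positive). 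Then weakly in $\Omega$
\[
\fpl u_\mu+u_\mu^{r-1}=\mu u_\mu^{q-1}\le\mu u_\lambda^{q-1}<\lambda u_\lambda^{q-1}=\fpl u_\lambda+u_\lambda^{r-1},
\]
where the first inequality uses $u_\mu\le u_\lambda$ and $q>1$. Taking $g(t)=t^{r-1}$, which is continuous and of locally bounded variation, and noting that $\fpl u_\lambda+g(u_\lambda)=\lambda u_\lambda^{q-1}\le K$ for a suitable $K>0$ by the $L^\infty$ bound of Proposition \ref{apb}, Theorem \ref{scp} applies and yields $u_\lambda-u_\mu\in{\rm int}(\cs_+)$.

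The main obstacle I anticipate is a bookkeeping/coherence issue rather than a deep analytic one: the lemma speaks of ``the'' solution $u_\lambda$, but in the superdiffusive case uniqueness fails, so one must be careful that the solution produced by the order-interval minimization for $(P_\lambda)$ is genuinely $\ge u_\mu$, so that the chain of inequalities feeding Theorem \ref{scp} is valid; this is handled by always building $u_\lambda$ from a subsolution $u_\mu$ with $\mu\in(\lambda_*,\lambda)$, and then for $\lambda>\mu>\lambda_*$ comparing the two minimal-type solutions. A secondary technical point is verifying $u_\mu\le\bar u$ and the truncation inequalities precisely in the weak/nonlocal setting — here one relies on $\widetilde W^{s,p}(\Omega)$-membership of the truncations and on Proposition \ref{stm}, exactly as in the proofs of Theorems \ref{smp} and \ref{scp}.
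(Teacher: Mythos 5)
Your proof is correct and follows essentially the same route as the paper's: truncate $f_\lambda$ from below at the strict subsolution $u_\mu$, minimize the truncated functional, recover $u_\lambda\ge u_\mu$ via Proposition \ref{stm}, and conclude the strict ordering with Theorem \ref{scp} applied to $g(t)=t^{r-1}$. The only difference is that you also construct an upper supersolution from the torsion function; the paper dispenses with this because $F_\lambda$ is bounded from above (as $q<r$), so the one-sidedly truncated functional is already coercive without an upper barrier.
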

\begin{proof}
Fix $\lambda>\lambda_*$. Then we can find $\mu\in\Lambda$ s.t.\ $\mu<\lambda$, and a solution $u_\mu\in{\rm int}(\cs_+)$ of $(P_\mu)$. We have weakly in $\Omega$
\beq\label{mon1}
\fpl u_\mu = f_\mu(u_\mu) < f_\lambda(u_\mu),
\eeq
i.e., $u_\mu$ is a strict subsolution of $(P_\lambda)$. We use $u_\mu$ to truncate the reaction $f_\lambda$. Set for all $(x,t)\in\Omega\times\R$
\[\hat f_\lambda(x,t) = \begin{cases}
f_\lambda(u_\mu(x)) & \text{if $t\le u_\mu(x)$} \\
f_\lambda(t) & \text{if $t>u_\mu(x)$}
\end{cases}\]
and
\[\hat F_\lambda(x,t) = \int_0^t \hat f_\lambda(x,\tau)\,d\tau.\]
So $\hat f_\lambda:\Omega\times\R\to\R$ satisfies ${\bf H}$. Set for all $u\in\w$
\[\hat\Phi_\lambda(u) = \frac{\|u\|^p}{p}-\int_\Omega \hat F_\lambda(x,u)\,dx,\]
then as in Section \ref{sec2} it is seen that $\hat\Phi_\lambda\in C^1(\w)$. Reasoning as in Theorem \ref{sub} we also see that $\hat\Phi_\lambda$ is coercive and sequentially weakly l.s.c., so there exists $u_\lambda\in\w$ s.t.\
\[\hat\Phi_\lambda(u_\lambda) = \inf_{\w}\hat\Phi_\lambda.\]
As a consequence we have $\hat\Phi_\lambda'(u_\lambda)=0$ in $W^{-s,p'}(\Omega)$, i.e., weakly in $\Omega$
\beq\label{mon2}
\fpl u_\lambda = \hat f_\lambda(x,u).
\eeq
Testing \eqref{mon2} with $(u_\mu-u_\lambda)^+\in\w_+$ we get
\begin{align*}
\langle\fpl u_\lambda,(u_\mu-u_\lambda)^+\rangle &= \int_\Omega \hat f_\lambda(x,u_\lambda)(u_\mu-u_\lambda)^+\,dx \\
&= \int_\Omega f_\lambda(u_\mu)(u_\mu-u_\lambda)^+\,dx,
\end{align*}
which along with \eqref{mon1} gives
\[\langle\fpl u_\mu-\fpl u_\lambda,(u_\mu-u_\lambda)^+\rangle \le 0.\]
By Proposition \ref{stm} we have $u_\mu\le u_\lambda$ in $\Omega$. So \eqref{mon2} rephrases as
\[\fpl u_\lambda = f_\lambda(u_\lambda)\]
weakly in $\Omega$, and moreover $u_\lambda>0$ in $\Omega$. As in Lemma \ref{exi} we see that $u_\lambda\in{\rm int}(\cs_+)$ and it solves $(P_\lambda)$, so $\lambda\in\Lambda$.
\vskip2pt
\noindent
Finally, for all $\lambda>\mu>\lambda_*$ we have weakly in $\Omega$
\[\fpl u_\mu+u_\mu^{r-1} = \mu u_\mu^{q-1} < \lambda u_\lambda^{q-1} = \fpl u_\lambda+u_\lambda^{r-1},\]
as well as $0<u_\mu\le u_\lambda$ in $\Omega$. By Theorem \ref{scp} we conclude that $u_\lambda-u_\mu\in{\rm int}(\cs_+)$.
\end{proof}

\noindent
Note that in Lemma \ref{mon} we cannot use Proposition \ref{wcp} to prove the monotonicity of $\lambda\mapsto u_\lambda$, as we did in sub- and equidiffusive cases: this is due to the fact that $t\mapsto f_\lambda(t)/t^{p-1}$ is not a decreasing mapping in $(0,\infty)$ (recall that $q>p$). The same reason prevents the use of Proposition \ref{wcp} to prove uniqueness of the solution.
\vskip2pt
\noindent
In fact, for $\lambda>\lambda_*$ we detect at least one more solution beside $u_\lambda$:

\begin{lemma}\label{two}
For all $\lambda>\lambda_*$ there exists a second solution $v_\lambda\in{\rm int}(\cs_+)$ of $(P_\lambda)$ s.t.\ $u_\lambda-v_\lambda\in{\rm int}(\cs_+)$.
\end{lemma}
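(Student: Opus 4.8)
The plan is to produce the second solution $v_\lambda$ via the mountain pass theorem applied to a suitably truncated functional. Fix $\lambda>\lambda_*$, and by Lemma \ref{mon} choose $\mu\in(\lambda_*,\lambda)$ together with the solution $u_\mu\in{\rm int}(\cs_+)$ of $(P_\mu)$ and the solution $u_\lambda\in{\rm int}(\cs_+)$ of $(P_\lambda)$ with $u_\lambda-u_\mu\in{\rm int}(\cs_+)$. First I would introduce the one-sided truncation $\hat f_\lambda$ at the subsolution $u_\mu$ (exactly as in the proof of Lemma \ref{mon}) and the associated $C^1$ functional $\hat\Phi_\lambda$, so that the global minimizer $u_\lambda$ of $\hat\Phi_\lambda$ is already in hand. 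The key point is that $u_\lambda$ is in fact a \emph{local} minimizer of the original functional $\Phi_\lambda$: indeed, since $u_\mu\le u_\lambda$ and both lie in ${\rm int}(\cs_+)$ with $u_\lambda-u_\mu\in{\rm int}(\cs_+)$, a $\cs$-small perturbation $v$ of $u_\lambda$ still satisfies $u_\lambda+v\ge u_\mu$ in $\Omega$, so $\hat\Phi_\lambda$ and $\Phi_\lambda$ agree near $u_\lambda$ in the $\cs$-topology; hence $u_\lambda$ is a $\cs$-local minimizer of $\Phi_\lambda$, and by Proposition \ref{svh} it is a $\w$-local minimizer of $\Phi_\lambda$.

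Next I would set up the mountain pass geometry for $\Phi_\lambda$ with base point $u_\lambda$. If $u_\lambda$ is not a strict local minimizer one can extract a second critical point directly from the local-minimum structure (a standard argument, e.g.\ via the fact that a non-strict local minimizer of a $C^1$ functional satisfying $(PS)$ yields another critical point at the same level); otherwise, since $p<q<r<\p$ and the reaction $f_\lambda$ is superhomogeneous of degree $q-1>p-1$ at infinity, one checks that $\Phi_\lambda(\tau\hat u_1+u_\lambda)\to-\infty$ as $\tau\to+\infty$ — or more simply $\Phi_\lambda(t\hat u_1)\to-\infty$ as $t\to\infty$ — so there is a point $e$ far from $u_\lambda$ with $\Phi_\lambda(e)<\Phi_\lambda(u_\lambda)$, while the strict local minimum at $u_\lambda$ provides a mountain ring separating $u_\lambda$ from $e$. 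Because $\Phi_\lambda$ satisfies the bounded $(PS)$-condition recalled in Section \ref{sec2}, and one can verify the full (unrestricted) $(PS)$ or Cerami condition using the superlinear reaction together with Proposition \ref{apb} to bound $(PS)$-sequences, the mountain pass theorem yields a critical point $v_\lambda$ of $\Phi_\lambda$ with $\Phi_\lambda(v_\lambda)\ge\Phi_\lambda(u_\lambda)$ and $v_\lambda\ne u_\lambda$, so $v_\lambda$ is a solution of $(P_\lambda)$; testing with $v_\lambda^-$ and applying \eqref{pnp} gives $v_\lambda\ge0$, and $v_\lambda\not\equiv0$ since $\Phi_\lambda(0)=0$ could be handled by checking the mountain pass level is positive (or by noting $v_\lambda\ge u_\mu$ if one instead builds the MP geometry for $\hat\Phi_\lambda$). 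Then Theorem \ref{smp} upgrades $v_\lambda$ to ${\rm int}(\cs_+)$.

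Finally, to obtain the order relation $u_\lambda-v_\lambda\in{\rm int}(\cs_+)$ I would exploit the truncated functional. Working with $\hat\Phi_\lambda$ rather than $\Phi_\lambda$ from the start, every critical point produced lies above $u_\mu$ by the argument of Lemma \ref{mon} (test \eqref{mon2} with $(u_\mu-v)^+$), so $v_\lambda\ge u_\mu>0$; moreover $v_\lambda\le u_\lambda$ because $u_\lambda$ is the \emph{global} minimizer of $\hat\Phi_\lambda$ and $v_\lambda$ is a mountain-pass point, so in particular $v_\lambda$ cannot lie strictly above $u_\lambda$ — here one compares the reaction: on $\{v_\lambda>u_\lambda\}$ one has $\fpl v_\lambda-\fpl u_\lambda=f_\lambda(v_\lambda)-f_\lambda(u_\lambda)$, and testing with $(v_\lambda-u_\lambda)^+$ does not immediately give monotonicity since $q>p$, so instead I would re-truncate $\hat f_\lambda$ also from above at $u_\lambda$, observe that with the double truncation $u_\lambda$ is still the global minimizer, the mountain pass still works in the order interval $[u_\mu,u_\lambda]$, and any resulting solution automatically satisfies $u_\mu\le v_\lambda\le u_\lambda$. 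Once $0<v_\lambda\le u_\lambda$ with $v_\lambda\not\equiv u_\lambda$ is established, the strong comparison Theorem \ref{scp} applied to $g(t)=t^{r-1}$ — using $\fpl v_\lambda+v_\lambda^{r-1}=\lambda v_\lambda^{q-1}\le\lambda u_\lambda^{q-1}=\fpl u_\lambda+u_\lambda^{r-1}\le K$ — yields $u_\lambda-v_\lambda\in{\rm int}(\cs_+)$.

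The main obstacle I expect is the \emph{double truncation} bookkeeping together with verifying that the mountain pass critical point of the doubly-truncated functional is genuinely distinct from $u_\lambda$ and does not collapse onto the ``artificial'' solutions $u_\mu$ or $u_\lambda$: one must check the mountain pass level lies strictly above $\hat\Phi_\lambda(u_\lambda)$, which requires a careful choice of the path and uses both the strict local-minimum property of $u_\lambda$ (via Proposition \ref{svh}) and the fact that $u_\lambda\in{\rm int}(\cs_+)$ so that interior perturbations remain in the order interval. The $(PS)$ verification for the doubly-truncated (hence bounded-reaction) functional is by contrast routine, since coercivity-type estimates make $(PS)$-sequences bounded and the $(S)_+$-property finishes the job.
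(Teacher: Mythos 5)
Your first paragraph (one-sided truncation at $u_\mu$, identification of $u_\lambda$ as a $\cs$-local and hence, via Proposition \ref{svh}, a $\w$-local minimizer of $\Phi_\lambda$) matches the paper exactly. After that there is a genuine gap: your mountain pass geometry does not exist. Since $r>q$, we have $F_\lambda(t)=\lambda t^q/q-t^r/r\to-\infty$ as $t\to\infty$, so $\Phi_\lambda(\tau\hat u_1)=\tau^p\|\hat u_1\|^p/p-\lambda\tau^q\|\hat u_1\|_q^q/q+\tau^r\|\hat u_1\|_r^r/r\to+\infty$; indeed $\Phi_\lambda$ is \emph{coercive} (this is how the minimizer $u_\lambda$ was found in the first place). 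There is no far point $e$ with $\Phi_\lambda(e)<\Phi_\lambda(u_\lambda)$: you have imported the Ambrosetti--Rabinowitz superlinear picture into a logistic problem where it fails. The correct second well is at $0$: because $q>p$, one has $F_\lambda(t)\le\eps(t^+)^p/p$ for $|t|$ small, so $0$ is a \emph{strict} local minimizer, and the relevant mountain pass theorem is the Pucci--Serrin two-local-minima version applied to the pair $\{0,u_\lambda\}$. Your fallback of a double truncation to the order interval $[u_\mu,u_\lambda]$ makes this worse, not better: truncating from below at $u_\mu>0$ destroys the well at $0$ (near $0$ the truncated functional is $\|u\|^p/p-\int_\Omega f_\lambda(u_\mu)u\,dx+o(\|u\|)$, so $0$ is not even a critical point), and you never exhibit a second local minimizer, so no mountain pass configuration is available there. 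The assertion that a mountain pass point ``cannot lie strictly above the global minimizer'' is also a non sequitur.

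The paper's resolution is a carefully chosen \emph{upper} truncation applied to the original (not the $u_\mu$-truncated) functional: $\tilde f_\lambda(x,t)=\lambda u_\lambda^{q-1}(x)-t^{r-1}$ for $t>u_\lambda(x)$, i.e.\ only the $\lambda t^{q-1}$ term is frozen. This single choice delivers all three needed facts at once: (i) $\tilde f_\lambda\le f_\lambda$ everywhere, hence $\tilde\Phi_\lambda\ge\Phi_\lambda$ with equality at $u_\lambda$, so $u_\lambda$ remains a local minimizer of $\tilde\Phi_\lambda$ and $0$ is a strict local minimizer of $\tilde\Phi_\lambda$; (ii) for $t>u_\lambda(x)$ one has $\tilde f_\lambda(x,t)\le\lambda u_\lambda^{q-1}(x)-u_\lambda^{r-1}(x)=f_\lambda(u_\lambda(x))$, so testing with $(v_\lambda-u_\lambda)^+$ and invoking Proposition \ref{stm} gives $v_\lambda\le u_\lambda$ despite the failure of the Brezis--Oswald monotonicity (recall $q>p$ blocks Proposition \ref{wcp} here); (iii) $\tilde\Phi_\lambda$ is still coercive, so $(PS)$ holds. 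Note that the naive freeze $\tilde f_\lambda(x,t)=f_\lambda(u_\lambda(x))$ for $t>u_\lambda(x)$ would give (ii) but not (i), since $f_\lambda$ is not monotone. Your final step (strong comparison via Theorem \ref{scp} with $g(t)=t^{r-1}$ to get $u_\lambda-v_\lambda\in{\rm int}(\cs_+)$) is correct and is what the paper does, but it only becomes available once $0<v_\lambda\le u_\lambda$, $v_\lambda\not\equiv u_\lambda$ has been secured by the mechanism above.
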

\begin{proof}
Fix $\lambda>\lambda_*$. As in Lemma \ref{mon} we pick $\mu\in\Lambda$ s.t.\ $\lambda_*<\mu<\lambda$, define $\hat\Phi_\lambda\in C^1(\w)$, and find a global minimizer $u_\lambda\in{\rm int}(\cs_+)$, which solves $(P_\lambda)$ and satisfies $u_\lambda-u_\mu\in{\rm int}(\cs_+)$. Set now
\[V=\big\{u_\mu+v:\,v\in{\rm int}(\cs_+)\big\},\]
an open set in $\cs$ containing $u_\lambda$. For all $x\in\Omega$, $t>u_\mu(x)$ we have
\begin{align*}
\hat F_\lambda(x,t) &= \int_0^{u_\mu(x)}f_\lambda(u_\mu(x))\,d\tau+\int_{u_\mu(x)}^t f_\lambda(\tau)\,d\tau \\
&= F_\lambda(t)+\big[f_\lambda(u_\mu(x))u_\mu(x)-F_\lambda(u_\mu(x))\big],
\end{align*}
hence for all $u\in V\cap\w$ (note that $u>u_\mu$ in $\Omega$)
\[\hat\Phi_\lambda(u) = \frac{\|u\|^p}{p}-\int_\Omega F_\lambda(u)\,dx-\int_\Omega\big[f_\lambda(u_\mu)u_\mu-F_\lambda(u_\mu)\big]\,dx = \Phi_\lambda(u)-C,\]
with $C\in\R$ independent of $u$. So, recalling that $u_\lambda$ minimizes $\hat\Phi_\lambda$ over $\w$, for all $u\in V\cap\w$ we have
\[\Phi_\lambda(u) \ge \Phi_\lambda(u_\lambda),\]
i.e., $u_\lambda$ is a local minimizer of $\Phi_\lambda$ in $\cs$. By Proposition \ref{svh}, $u_\lambda$ is as well a local minimizer of $\Phi_\lambda$ in $\w$. To proceed with the proof we need to perform a different truncation on the reaction. Set for all $(x,t)\in\Omega\times\R$
\[\tilde f_\lambda(x,t) = \begin{cases}
f_\lambda(t) & \text{if $t\le u_\lambda(x)$} \\
\lambda u_\lambda^{q-1}(x)-t^{r-1} & \text{if $t>u_\lambda(x)$}
\end{cases}\]
and as usual
\[\tilde F_\lambda(x,t) = \int_0^t \tilde f_\lambda(x,\tau)\,d\tau.\]
Clearly $\tilde f_\lambda:\Omega\times\R\to\R$ satisfies ${\bf H}$. So, we set for all $u\in\w$
\[\tilde\Phi_\lambda(u) = \frac{\|u\|^p}{p}-\int_\Omega\tilde F_\lambda(x,u)\,dx\]
and thus define a functional $\tilde\Phi_\lambda\in C^1(\w)$. We note that for all $(x,t)\in\Omega\times\R$ we have $\tilde f_\lambda(x,t)\le f_\lambda(t)$ and hence $\tilde F_\lambda(x,t) \le F_\lambda(t)$. This in turn implies for all $u\in\w$
\beq\label{two1}
\tilde\Phi_\lambda(u) \ge \Phi_\lambda(u).
\eeq
Since $u_\lambda$ is a local minimizer of $\Phi_\lambda$, we can find $\rho>0$ s.t.\ $\Phi_\lambda(u)\ge\Phi_\lambda(u_\lambda)$ for all $u\in B_\rho(u_\lambda)$, hence by \eqref{two1}
\[\tilde\Phi_\lambda(u) \ge \Phi_\lambda(u) \ge \Phi_\lambda(u_\lambda) = \tilde\Phi_\lambda(u_\lambda).\]
So, $u_\lambda$ is as well a local minimizer of $\tilde\Phi_\lambda$. Besides, fix $\eps\in(0,\hat\lambda_1)$ (with $\hat\lambda_1>0$ defined by \eqref{pev}), then we can find $\delta>0$ s.t.\ for all $x\in\R$, $|t|\le\delta$
\[\tilde F_\lambda(x,t) \le F_\lambda(t) \le \eps\frac{(t^+)^p}{p}.\]
Since $\Omega$ is bounded, we can find $\sigma>0$ s.t.\ $\|u\|_\infty\le\delta$ for all $u\in\cs$, $\|u\|_{0,s}\le \sigma$. Then, using also \eqref{pev}, for all $u\in\w\cap\cs$ with $0<\|u\|_{0,s}\le\sigma$ we have
\[\tilde\Phi_\lambda(u) \ge \frac{\|u\|^p}{p}-\int_\Omega \eps\frac{(u^+)^p}{p}\,dx \ge (\hat\lambda_1-\eps)\frac{\|u\|_p^p}{p} >0.\]
So, $0$ is a strict local minimizer of $\tilde\Phi_\lambda$ in $\cs$. By Proposition \ref{svh} again, $0$ is as well a local minimizer of $\tilde\Phi_\lambda$ in $\w$. From Lemma \ref{exi} we know that $\Phi_\lambda$ is coercive in $\w$, so by \eqref{two1} $\tilde\Phi_\lambda$ is coercive as well. As recalled in Section \ref{sec2}, $\tilde\Phi_\lambda$ then satisfies the $(PS)$-condition. Thus, we may apply the mountain pass theorem (see \cite[Theorem 2.1]{PS}) and deduce the existence of $v_\lambda\in\w\setminus\{0,u_\lambda\}$ s.t.\ $\tilde\Phi_\lambda'(v_\lambda)=0$ in $W^{-s,p'}(\Omega)$. So we have weakly in $\Omega$
\beq\label{two2}
\fpl v_\lambda = \tilde f_\lambda(x,v_\lambda).
\eeq
Testing \eqref{two2} with $-v_\lambda^-\in\w$ and applying \eqref{pnp} we have
\[\|v_\lambda^-\|^p \le \langle\fpl v_\lambda,-v_\lambda^-\rangle = \int_\Omega \tilde f_\lambda(x,v_\lambda)(-v_\lambda^-)\,dx = 0,\]
so $v_\lambda\in\w_+\setminus\{0\}$. Recalling the definition of $\tilde f_\lambda$ and testing \eqref{two2} with $(v_\lambda-u_\lambda)^+\in\w$, we have
\begin{align*}
\langle\fpl v_\lambda,(v_\lambda-u_\lambda)^+\rangle &= \int_\Omega \tilde f_\lambda(x,v_\lambda)(v_\lambda-u_\lambda)^+\,dx \\
&\le \int_\Omega f_\lambda(u_\lambda)(v_\lambda-u_\lambda)^+\,dx \\
&= \langle\fpl u_\lambda,(v_\lambda-u_\lambda)^+\rangle,
\end{align*}
which by Proposition \ref{stm} implies $v_\lambda\le u_\lambda$ in $\Omega$. So, \eqref{two2} rephrases as
\[\fpl v_\lambda = f_\lambda(v_\lambda)\]
weakly in $\Omega$. Using Theorem \ref{smp} as in Theorem \ref{sub}, we see that $v_\lambda\in{\rm int}(\cs_+)$ and it solves $(P_\lambda)$. So we have $v_\lambda\le u_\lambda$ in $\Omega$, $v_\lambda\not\equiv u_\lambda$, and weakly in $\Omega$
\[\fpl v_\lambda+v_\lambda^{r-1} = \lambda v_\lambda^{q-1} \le \lambda u_\lambda^{q-1} = \fpl u_\lambda+u_\lambda^{r-1}.\]
By Theorem \ref{scp} we have $u_\lambda-v_\lambda\in{\rm int}(\cs_+)$.
\end{proof}

\noindent
To complete the picture, we examine the limiting case $\lambda=\lambda_*$. In such case we can prove existence of at least one solution, to which all principal solutions $u_\lambda$ converge:

\begin{lemma}\label{str}
There exists a solution $u_*\in{\rm int}(\cs_+)$ of $(P_{\lambda_*})$ s.t.\ $u_\lambda\to u_*$ in both $\w$ and $\cs$ as $\lambda\to\lambda_*^+$.
\end{lemma}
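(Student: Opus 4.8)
\emph{Proof proposal.} The plan is to pass to the limit along the monotone family of principal solutions produced in Lemma \ref{mon}. Fix a strictly decreasing sequence $\lambda_n\to\lambda_*^+$ and set $u_n=u_{\lambda_n}\in{\rm int}(\cs_+)$; by Lemma \ref{mon} this sequence is pointwise decreasing, so $0\le u_n\le u_{\lambda_1}$ in $\Omega$, and testing $(P_{\lambda_n})$ with $u_n$ gives
\[\|u_n\|^p=\int_\Omega(\lambda_n u_n^q-u_n^r)\,dx\le\lambda_1\|u_{\lambda_1}\|_q^q,\]
so $(u_n)$ is bounded in $\w$. Up to a subsequence, $u_n\rightharpoonup u_*$ in $\w$, $u_n\to u_*$ in $L^q(\Omega)$ and $L^r(\Omega)$, and $u_n\downarrow u_*$ a.e.\ (the pointwise infimum of the family). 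Testing $(P_{\lambda_n})$ with $u_n-u_*\in\w$ and using H\"older's inequality as in the proof of Theorem \ref{sub} shows $\langle\fpl u_n,u_n-u_*\rangle\to 0$, so the $(S)_+$-property yields $u_n\to u_*$ in $\w$; passing to the limit in the weak formulation then gives $\fpl u_*=f_{\lambda_*}(u_*)$ weakly in $\Omega$, i.e.\ $u_*\in\w$ is a nonnegative weak solution of $(P_{\lambda_*})$. Moreover, boundedness of $(u_n)$ in $\w$ and Proposition \ref{reg} make $(u_n)$ bounded in $C^\alpha_s(\overline\Omega)$, hence, by the compact embedding $C^\alpha_s(\overline\Omega)\hookrightarrow\cs$ together with identification of the limit, $u_n\to u_*$ in $\cs$; in particular $u_*\in\cs$.

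The main obstacle is to exclude the degenerate case $u_*\equiv 0$, that is, to rule out the principal branch collapsing to zero at $\lambda_*$. Suppose $u_*\equiv 0$; then $u_n\to 0$ in $\cs$, hence $\|u_n\|_\infty\to 0$ (since $\|\cdot\|_\infty\le C\|\cdot\|_{0,s}$ on the bounded domain $\Omega$). On the other hand, from $\|u_n\|_q^q\le\|u_n\|_\infty^{q-p}\|u_n\|_p^p$ and \eqref{pev} one obtains
\[\|u_n\|^p\le\lambda_n\|u_n\|_q^q\le\frac{\lambda_n}{\hat\lambda_1}\|u_n\|_\infty^{q-p}\|u_n\|^p,\]
and since $u_n\not\equiv 0$ this forces $\|u_n\|_\infty^{q-p}\ge\hat\lambda_1/\lambda_n$ for every $n$. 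Letting $n\to\infty$, the left-hand side tends to $0$ (as $q>p$) while the right-hand side tends to $\hat\lambda_1/\lambda_*>0$ (recall $\lambda_*>0$ by Lemma \ref{exi}), a contradiction. Hence $u_*\not\equiv 0$.

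It remains to conclude. Since $u_*\ge 0$, $u_*\not\equiv 0$ and $u_*$ satisfies $\fpl u_*+u_*^{r-1}=\lambda_* u_*^{q-1}\ge 0$ weakly in $\Omega$, Theorem \ref{smp} applied with $g(t)=t^{r-1}$ (continuous and locally of bounded variation) gives $u_*\in{\rm int}(\cs_+)$; thus $u_*$ solves $(P_{\lambda_*})$ and in particular $\lambda_*\in\Lambda$. Finally, to upgrade the subsequential convergence to the full limit as $\lambda\to\lambda_*^+$: by Lemma \ref{mon} the net $(u_\lambda)_{\lambda>\lambda_*}$ is monotone, so it admits a well-defined pointwise limit as $\lambda\to\lambda_*^+$, and the argument above shows that \emph{any} sequence $\lambda_n\to\lambda_*^+$ has a subsequence along which $u_{\lambda_n}$ converges in $\w$ and in $\cs$ to a function agreeing a.e.\ with that pointwise limit; hence the limit is independent of the sequence and equals $u_*$, and by the usual subsequence principle $u_\lambda\to u_*$ in both $\w$ and $\cs$ as $\lambda\to\lambda_*^+$.
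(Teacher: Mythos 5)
Your proof is correct and follows the same overall skeleton as the paper's: extract a subsequence of principal solutions converging in $\w$ (via the $(S)_+$-property) and in $\cs$ (via the compact embedding $C^\alpha_s(\overline\Omega)\hookrightarrow\cs$), identify the limit as a nonnegative solution of $(P_{\lambda_*})$, rule out the trivial limit, upgrade to ${\rm int}(\cs_+)$ by Theorem \ref{smp}, and use the monotonicity from Lemma \ref{mon} to pass from subsequential to full convergence. The one place where you genuinely diverge is the crucial non-degeneracy step $u_*\not\equiv 0$. The paper normalizes, setting $v_n=u_n/\|u_n\|$, shows that both $u_n^{q-1}/\|u_n\|^{p-1}$ and $u_n^{r-1}/\|u_n\|^{p-1}$ converge weakly to $0$ in $L^{p'}(\Omega)$, and derives the contradiction $\|v\|=1$ versus $\|v\|^p=0$ through the $(S)_+$-property. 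Your argument is more elementary and quantitative: from $\|u_n\|^p\le\lambda_n\|u_n\|_q^q\le\lambda_n\|u_n\|_\infty^{q-p}\|u_n\|_p^p$ and \eqref{pev} you extract the explicit lower bound $\|u_n\|_\infty^{q-p}\ge\hat\lambda_1/\lambda_n$, which is incompatible with $\|u_n\|_\infty\to 0$ since $q>p$ and $\lambda_n\to\lambda_*>0$. This is essentially the mechanism the paper already uses in Lemma \ref{exi} to show $\lambda_*>0$ (comparison of $f_\lambda$ with $\hat\lambda_1 t^{p-1}$ for small data), here redeployed at the limit point; it avoids the blow-down and weak $L^{p'}$-convergence machinery entirely and even yields the a priori bound $\|u_\lambda\|_\infty\ge(\hat\lambda_1/\lambda)^{1/(q-p)}$ along the whole branch, at the modest cost of needing the uniform convergence $\|u_n\|_\infty\to 0$, which you correctly obtain from the $\cs$-convergence already established. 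Both routes are sound; yours is shorter.
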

\begin{proof}
We prove a slightly more general assertion. Let $(\lambda_n)$ be a decreasing sequence s.t.\ $\lambda_n\to\lambda_*^+$, and denote by $u_n\in{\rm int}(\cs_+)$ any solution of $(P_{\lambda_n})$, then up to a subsequence $u_n\to u_*$ in both $\w$ and $\cs$ as $n\to\infty$, being $u_*\in{\rm int}(\cs_+)$ a solution of $(P_{\lambda_*})$. First, for all $n\in\N$ we have weakly in $\Omega$
\beq\label{str1}
\fpl u_n = f_{\lambda_n}(u_n).
\eeq
Arguing as in the proof of Theorem \ref{sub}, we find $u_*\in\w_+$ s.t.\ up to a subsequence $u_n\to u_*$ in both $\w$ and $\cs$, hence we can pass to the limit in \eqref{str1} and get weakly in $\Omega$
\beq\label{str2}
\fpl u_* = f_{\lambda_*}(u_*).
\eeq
We claim that $u_*\not\equiv 0$. Arguing by contradiction, assume that $u_n\to 0$ in both $\w$ and $\cs$, hence in particular $u_n\to 0$ uniformly in $\Omega$. Then, for all $n\in\N$ big enough we have $0< u_n\le 1$ in $\Omega$. Set for all $n\in\N$
\[v_n = \frac{u_n}{\|u_n\|} \in\w\cap{\rm int}(\cs_+).\]
The sequence $(v_n)$ is obviously bounded in $\w$. By reflexivity and the compact embedding $\w\hookrightarrow L^p(\Omega)$, passing to a subsequence we have $v_n\rightharpoonup v$ in $\w$, $v_n\to v$ in $L^p(\Omega)$. Besides, by \eqref{str1}, for all $n\in\N$ we have weakly in $\Omega$
\beq\label{str3}
\fpl v_n = \lambda_n\frac{u_n^{q-1}}{\|u_n\|^{p-1}}-\frac{u_n^{r-1}}{\|u_n\|^{p-1}}.
\eeq
Consider the first term in the right hand side of \eqref{str3}. Since $0<u_n\le 1$ in $\Omega$ and $p<q$, we have
\[0 < \frac{u_n^{q-1}}{\|u_n\|^{p-1}} \le \frac{u_n^{p-1}}{\|u_n\|^{p-1}} = v_n^{p-1},\]
so $(u_n^{q-1}/\|u_n\|^{p-1})$ is bounded in $L^{p'}(\Omega)$. Passing to a subsequence, we have $u_n^{q-1}/\|u_n\|^{p-1}\rightharpoonup w$ in $L^{p'}(\Omega)$, hence {\em a fortiori} in $L^1(\Omega)$. By H\"older's inequality and the continuous embedding $\w\hookrightarrow L^q(\Omega)$ we have
\begin{align*}
\|w\|_1 &\le \liminf_n\int_\Omega\frac{u_n^{q-1}}{\|u_n\|^{p-1}}\,dx \\
&\le \limsup_n\frac{\|u_n\|_q^{q-1}|\Omega|^\frac{1}{q}}{\|u_n\|^{p-1}} \\
&\le C\limsup_n\|u_n\|^{q-p} = 0.
\end{align*}
So we get $w=0$, i.e.,
\beq\label{str4}
\frac{u_n^{q-1}}{\|u_n\|^{p-1}}\rightharpoonup 0 \ \text{in $L^{p'}(\Omega)$.}
\eeq
An entirely similar argument proves that $(u_n^{r-1}/\|u_n\|^{p-1})$ is bounded in $L^{p'}(\Omega)$ and, up to a subsequence,
\beq\label{str5}
\frac{u_n^{r-1}}{\|u_n\|^{p-1}}\rightharpoonup 0 \ \text{in $L^{p'}(\Omega)$.}
\eeq
Testing \eqref{str3} with $(v_n-v)\in\w$ and using H\"older's inequality, we have for all $n\in\N$
\begin{align*}
\langle\fpl v_n,v_n-v\rangle &= \int_\Omega\Big[\lambda_n\frac{u_n^{q-1}}{\|u_n\|^{p-1}}-\frac{u_n^{r-1}}{\|u_n\|^{p-1}}\Big](v_n-v)\,dx \\
&\le \lambda_1\Big\|\frac{u_n^{q-1}}{\|u_n\|^{p-1}}\Big\|_{p'}\|v_n-v\|_p-\Big\|\frac{u_n^{r-1}}{\|u_n\|^{p-1}}\Big\|_{p'}\|v_n-v\|_p,
\end{align*}
and the latter tends to $0$ as $n\to\infty$ by the relations above. By the $(S)_+$-property of $\fpl$ we have $v_n\to v$ in $\w$, hence $\|v\|=1$. On the other hand, testing \eqref{str3} with $v\in\w$, we have for all $n\in\N$
\[\langle\fpl v_n,v\rangle = \int_\Omega\Big[\lambda_n\frac{u_n^{q-1}}{\|u_n\|^{p-1}}-\frac{u_n^{r-1}}{\|u_n\|^{p-1}}\Big]v\,dx.\]
Passing to the limit as $n\to\infty$ and recalling \eqref{str4} and \eqref{str5} we get $\|v\|^p=0$, a contradiction. Summarizing, $u_*\in\w_+\setminus\{0\}$ and satisfies \eqref{str2}. As in Lemma \ref{exi} we see that $u_*\in{\rm int}(\cs_+)$ solves $(P_{\lambda_*})$.
\vskip2pt
\noindent
Finally, taking into account the monotonicity property of Lemma \ref{mon}, we conclude that globally $u_\lambda\to u_*$ in both $\w$ and $\cs$, with monotone convergence, as $\lambda\to\lambda_*^+$, for some $u_*\in{\rm int}(\cs_+)$ solving $(P_{\lambda_*})$.
\end{proof}

\noindent
Looking at the proof of Lemma \ref{str} above, we can easily argue that, for any sequence $(\lambda_n)$ s.t.\ $\lambda_n\to\lambda_*^+$, the sequence of solutions $(v_{\lambda_n})$ provided by Lemma \ref{two} has a subsequence which converges to a solution of $(P_{\lambda_*})$, which might differ from the global limit of $u_\lambda$.
\vskip2pt
\noindent
Combining Lemmas \ref{exi}--\ref{str}, we obtain the following bifurcation result for the superdiffusive case (corresponding to case $(c)$ of Theorem \ref{main}):

\begin{theorem}\label{sup}
Let $2\le p<q<r<\p$. Then, there exists $\lambda_*>0$ with the following properties: for all $\lambda\in(0,\lambda_*)$ problem $(P_\lambda)$ has no solution; $(P_{\lambda_*})$ has at least one solution $u_*\in{\rm int}(\cs_+)$; and for all $\lambda>\lambda_*$ problem $(P_\lambda)$ has at least two solutions $u_\lambda,v_\lambda\in{\rm int}(\cs_+)$ s.t.\ $u_\lambda-v_\lambda\in{\rm int}(\cs_+)$, $u_\lambda-u_\mu\in{\rm int}(\cs_+)$ for all $\lambda>\mu>\lambda_*$, and $u_\lambda\to u_*$ in both $\w$ and $\cs$ as $\lambda\to\lambda_*$.
\end{theorem}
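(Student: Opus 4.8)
The plan is to assemble the information collected in Lemmas \ref{exi}--\ref{str}, which together determine the set $\Lambda$ completely. First I would invoke Lemma \ref{exi} to obtain that $\Lambda\neq\emptyset$ and that $\lambda_*:=\inf\Lambda$ is a strictly positive real number; this is the threshold announced in the statement. Then I would note that $\Lambda$ is exactly the set of parameters for which $(P_\lambda)$ is solvable: if $u\in\w_+\setminus\{0\}$ weakly solves the equation in $(P_\lambda)$, then $u\in\cs$ by Proposition \ref{reg}, and since $\fpl u+u^{r-1}=\lambda u^{q-1}\ge 0$ weakly in $\Omega$, Theorem \ref{smp} applied with $g(t)=t^{r-1}$ (continuous and of locally bounded variation) forces $u\in{\rm int}(\cs_+)$, i.e.\ $\lambda\in\Lambda$.

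With this in hand the three regimes are immediate. For $\lambda\in(0,\lambda_*)$, by definition of $\lambda_*$ as $\inf\Lambda$ we have $\lambda\notin\Lambda$, hence by the previous remark $(P_\lambda)$ has no solution at all. For $\lambda=\lambda_*$, Lemma \ref{str} provides a solution $u_*\in{\rm int}(\cs_+)$ of $(P_{\lambda_*})$ together with the convergence $u_\lambda\to u_*$ in both $\w$ and $\cs$ as $\lambda\to\lambda_*^+$; in particular $\lambda_*\in\Lambda$. For $\lambda>\lambda_*$, Lemma \ref{mon} gives $\lambda\in\Lambda$, a solution $u_\lambda\in{\rm int}(\cs_+)$, and the monotonicity $u_\lambda-u_\mu\in{\rm int}(\cs_+)$ for all $\lambda>\mu>\lambda_*$; Lemma \ref{two} then produces a second solution $v_\lambda\in{\rm int}(\cs_+)$ with $u_\lambda-v_\lambda\in{\rm int}(\cs_+)$, so in particular $u_\lambda\not\equiv v_\lambda$ and the two solutions are genuinely distinct. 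This exhausts all cases.

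As for the main obstacle: there is essentially none at this stage, since every piece of analytic content has already been established — the a priori lower bound $\lambda_*>0$ (Lemma \ref{exi}), the fact that $\Lambda$ is a closed half-line (Lemmas \ref{mon} and \ref{str}), the mountain-pass construction of the second solution together with the Sobolev-versus-H\"older minimizer principle (Lemma \ref{two}), and the compactness/limiting argument at $\lambda=\lambda_*$ (Lemma \ref{str}). The only point deserving a moment's care in the write-up is the identification of $\Lambda$ with the solvability set of $(P_\lambda)$, which, as indicated above, rests on the strong maximum principle of Theorem \ref{smp}.
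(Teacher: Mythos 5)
Your proposal is correct and coincides with the paper's own treatment: the paper proves Theorem \ref{sup} simply by combining Lemmas \ref{exi}--\ref{str}, exactly as you do. Your extra remark that any solution of $(P_\lambda)$ automatically lies in ${\rm int}(\cs_+)$ (via Proposition \ref{reg} and Theorem \ref{smp}), so that nonmembership in $\Lambda$ really yields nonexistence of \emph{all} solutions, is a worthwhile clarification that the paper leaves implicit.
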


\begin{remark}\label{gen}
For simplicity, we confined our study to the pure power logistic reactions. Nevertheless, most of our Theorem \ref{sup} can be extended to the following generalized logistic equation:
\[\begin{cases}
\fpl u = \lambda f(x,u)-g(x,u) & \text{in $\Omega$} \\
u > 0 & \text{in $\Omega$} \\
u = 0 & \text{in $\Omega^c$,}
\end{cases}\]
where $f,g:\Omega\times\R\to\R$ are Carath\'eodory mappings, both $(p-1)$-superlinear at $\infty$ and at $0$, satisfying a subcritical growth condition like ${\bf H}$, and jointly satisfying a pseudo-monotonicity condition (see \cite{IP} for the case of the $p$-Laplacian).
\end{remark}

\vskip4pt
\noindent
{\small {\bf Acknowledgement.} A.\ Iannizzotto and S.\ Mosconi are members of GNAMPA (Gruppo Nazionale per l'Analisi Matematica, la Probabilit\`a e le loro Applicazioni) of INdAM (Istituto Nazionale di Alta Matematica 'Francesco Severi') and are supported by the grant PRIN n.\ 2017AYM8XW: {\em Non-linear Differential Problems via Variational, Topological and Set-valued Methods}. A.\ Iannizzotto is also supported by the research project {\em Evolutive and Stationary Partial Differential Equations with a Focus on Biomathematics} (Fondazione di Sardegna 2019). S.\ Mosconi is also supported by grant PdR 2020-2022, linea 2: MOSAIC and linea 3: PERITO of the University of Catania. We wish to thank S.\ Jarohs for a stimulating discussion on the strong comparison principle.}
\vskip4pt

\end{document}